\documentclass[11pt,a4paper]{amsart}

\usepackage[T1]{fontenc}
\usepackage[utf8]{inputenc}
\usepackage{lmodern}
\usepackage{microtype}
\usepackage{amsmath,amssymb,amsthm}
\usepackage[margin=1.15in]{geometry}
\usepackage{xcolor}
\usepackage{hyperref}

\input{glyphtounicode}
\microtypesetup{protrusion=true,expansion=false}
\theoremstyle{plain}
\newtheorem{theorem}{Theorem}[section]
\newtheorem{lemma}[theorem]{Lemma}
\newtheorem{proposition}[theorem]{Proposition}
\newtheorem{corollary}[theorem]{Corollary}

\theoremstyle{definition}

\newtheorem{question}[theorem]{Question}

\theoremstyle{remark}
\newtheorem{remark}[theorem]{Remark}

\newcommand{\VV}{V}
\newcommand{\EE}{E}
\newcommand{\Mat}{\mathcal{M}}
\newcommand{\PP}{\mathbb{P}}
\newcommand{\EX}{\mathbb{E}}
\newcommand{\Ent}{\mathbb{H}}
\newcommand{\qbar}{\overline{q}}
\newcommand{\Bin}{\operatorname{Bin}}
\newcommand{\dg}{\partial}
\newcommand{\chil}{\chi'_{\ell}}
\newcommand{\Deltaop}{\Delta}

\title[Average vacancy of random hypergraph matchings]
  {A rate for the average vacancy of uniformly random matchings in linear hypergraphs}
\author{Anish Gupta}
\address{Independent researcher}
\email{ag2269@cantab.ac.uk}
\urladdr{https://orcid.org/0009-0008-8137-7729}
\date{7 August 2026}
\subjclass[2020]{Primary 05C70; Secondary 05C65, 60C05}
\keywords{hypergraph matching, linear hypergraph, average vacancy,
  matching enumeration, list edge-colouring}

\hypersetup{
  colorlinks=true,
  linkcolor=blue!45!black,
  citecolor=blue!45!black,
  urlcolor=blue!45!black,
  pdftitle={A rate for the average vacancy of uniformly random matchings in linear hypergraphs},
  pdfauthor={Anish Gupta},
  pdfsubject={Average vacancy and enumeration of matchings in regular linear hypergraphs},
  pdfkeywords={hypergraph matching, linear hypergraph, average vacancy, matching enumeration},
  pdfdisplaydoctitle=true
}

\begin{document}
\begin{abstract}
Let $M$ be chosen uniformly from all matchings of a finite linear $k$-uniform hypergraph
$H$, and let $\qbar(H)$ be the average probability that a vertex is left uncovered. If
$H$ has maximum degree $D$ and normalized average degree
$\beta=k|E(H)|/(|V(H)|D)$, then, for every fixed $k\ge2$ and uniformly in the order,
\[
  \qbar(H)\ \le\ 1-\beta+\bigl(\beta k+o_D(1)\bigr)
  \frac{\log\log D}{\log D}.
\]
The underlying estimate is the order-uniform count
\[
  \log Z(H)\ge\frac{|E(H)|}{D}
    \bigl(\log D-(k+o_D(1))\log\log D\bigr),
\]
and it also counts matchings of size $(1-o_D(1))|E(H)|/D$. We prove this by sampling
edges, deleting those incident with unusually large sampled degrees, and controlling only
the total deleted mass before applying the Molloy--Reed list edge-colouring theorem.

For $d$-regular $H$, a quantitative version of the earlier Asratian--Kuzjurin
sampling-to-counting route, using the near-perfect-matching theorem of Gould and Kelly,
sharpens the coefficient: for every fixed $k\ge3$,
\[
  \sup_H\qbar(H)\ \le\
  \bigl(\max\{3,k-1\}+o_d(1)\bigr)\frac{\log\log d}{\log d}.
\]
Kahn's stronger pointwise prediction was disproved by Lee for $k\ge3$. The qualitative
averaged conclusion was already recorded by Kahn and Kim, crediting Anders Johansson,
and also follows from the Grable--Asratian--Kuzjurin enumeration; neither source states a
rate.
\end{abstract}

\maketitle

\section{Introduction}
\label{sec:intro}

Let $H=(\VV,\EE)$ be a finite hypergraph with at least one edge: $\VV$ is a finite
nonempty set and $\EE$ is a nonempty set of $k$-element subsets of $\VV$. We write $n=|\VV|$
and $\dg(v)=\{e\in\EE:v\in e\}$ for the star at $v$, and we call $H$ \emph{$k$-uniform} if
every edge has exactly $k$ vertices, \emph{linear} if $|e\cap f|\le1$ for all distinct
$e,f\in\EE$, and \emph{$d$-regular} if $|\dg(v)|=d$ for every $v\in\VV$. A \emph{matching}
of $H$ is a set of pairwise disjoint edges; $\Mat(H)$ denotes the set of all matchings of
$H$, \emph{including the empty matching}, and
\[
  Z(H)\ :=\ |\Mat(H)| ,
\]
which is the matching polynomial of $H$ evaluated at activity $1$. Throughout, $M$ denotes
a matching drawn uniformly at random from $\Mat(H)$, that is, from the activity-one
monomer--dimer law, rather than a greedy, nibble, or maximum matching. All logarithms are
natural. For $v\in\VV$ put
\[
  q_H(v)\ :=\ \PP\bigl(v\text{ is not covered by }M\bigr),
  \qquad
  \qbar(H)\ :=\ \frac1n\sum_{v\in\VV}q_H(v).
\]

How much of $H$ does such an $M$ cover? Kahn \cite{Kahn95,Kahn97} predicted in 1995 that
for $d$-regular linear $k$-uniform $H$ one has $q_H(v)=(1+o_d(1))\,d^{-1/k}$ at
\emph{every} vertex $v$, so that $M$ is almost perfect and, moreover, uniformly so. Kahn
and Kim \cite{KahnKim98} proved this for $k=2$, together with the asymptotically correct
variance of $|M|$.

For $k\ge3$ the pointwise prediction is false. For every $k\ge3$, every
$\varepsilon>0$ and every threshold $d_0$, Lee \cite{Lee} constructs some $d>d_0$ and a
$d$-regular linear $k$-uniform hypergraph containing a vertex $v_1$ with
$q_H(v_1)>1-\frac{1+\varepsilon}{d^{k-2}}$ alongside a vertex $v_2$ with
$q_H(v_2)<\frac{1+\varepsilon}{d+1}$ \cite[Theorem 1.4]{Lee}, and shows that this range is
best possible \cite[Theorem 1.5]{Lee}. A vertex can therefore be almost surely uncovered.

The failure may nevertheless be local. A single vertex with $q_H(v)$ close to $1$ moves
the average $\qbar(H)$ by only $O(1/n)$, and the exact identity in
Proposition~\ref{prop:avgid} below says that $\qbar(H)=o(1)$ is equivalent to $M$ being
almost perfect in expectation. Lee closes \cite{Lee} by observing that it may nevertheless
be true that $\sum_v q_H(v)=o_d(n)$ and writing that he believes this.
Writing
\[
  U_k(d)\ :=\ \sup\bigl\{\,\qbar(H)\ :\ H\text{ a finite $d$-regular linear $k$-uniform
  hypergraph}\,\bigr\},
\]
this is exactly the assertion $U_k(d)\to0$ as $d\to\infty$.

The qualitative assertion is not new, and reaches us by two routes. Kahn and Kim
\cite[p.~204]{KahnKim98} record, with attribution to Anders Johansson, that Pippenger's
theorem gives
\[
  \EX|M|\ \ge\ (1-o_d(1))\frac{n}{k}
\]
for a uniformly random matching of a simple $d$-regular $k$-uniform hypergraph. Their
``simple'' means linear, and their error depends only on $d$. Thus the remark is exactly
$U_k(d)\to0$. It is attributed and unproved there and supplies no rate.

There is also an enumerative route. The exponential abundance of nearly perfect
matchings was announced by Grable in the survey of Grable and Phelps
\cite{GrablePhelps}. Asratian and Kuzjurin \cite[Theorem 2 and Corollaries 1--2,
\S3]{AsratianKuzjurin} subsequently proved that, for every asymptotic sequence of
fixed-uniformity, almost-regular hypergraphs with degree $d\to\infty$ and maximum
codegree $o(d)$,
\[
  \log Z(H)=(1+o(1))\frac{n}{k}\log d.
\]
Exact regularity and linearity meet their hypotheses. Since linearity also forces
$n\ge1+(k-1)d$, a contrary sequence to $U_k(d)\to0$, combined with the elementary
star-entropy upper bound proved below, would contradict their corollary. Their proof
samples at a polynomially small density, obtains a large matching via the local lemma and
Pippenger's theorem, and transfers that event to a count by a first-moment comparison.
Making that proof quantitative with the 2025 theorem of Gould and Kelly gives the
sharpened regular bound in Theorem~\ref{thm:nibble} below. The separate contribution of
our argument is the maximum-degree statement, the size-resolved count, and an elementary
aggregate-trimming proof of an order-uniform $O_k(\log\log d/\log d)$ rate.

In the opposite direction, Proposition~\ref{prop:pointlower} below gives the elementary
pointwise bound $q_H(v)\ge1/(|\dg(v)|+1)$. Its regular average form is Lee's
Corollary 5.1 \cite[Corollary 5.1]{Lee}.

The supremum defining $U_k(d)$ allows arbitrary order. Disjoint unions create no extra
difficulty: their average vacancy is the vertex-weighted average of the component
vacancies, so the same supremum is obtained by restricting to connected hypergraphs. The
real obstruction is that $n$ may be arbitrarily large while $d$ is fixed. Bennett and
Frieze give a sharper count when $(\log n)^{10}=o(d)$, but that hypothesis does not control
the unrestricted supremum (see the discussion after Theorem~\ref{thm:engine}).

\subsection{Results}
\label{subsec:results}

\begin{theorem}[Average coverage, uniformly in the order]
\label{thm:main}
Fix $k\ge2$. Let $H$ be a finite nonempty linear $k$-uniform hypergraph on $n$ vertices,
let $D=\Deltaop(H)$, and put
\[
  \beta(H):=\frac{k|\EE|}{nD}.
\]
Then, uniformly over $H$ as $D\to\infty$,
\[
  \qbar(H)\ \le\ 1-\beta(H)
  +\bigl(\beta(H)k+o_D(1)\bigr)\frac{\log\log D}{\log D}.
\]
In particular,
\[
  U_k(d)\ \le\ \bigl(k+o_d(1)\bigr)\frac{\log\log d}{\log d}
  \ \longrightarrow\ 0.
\]
The error terms depend only on the degree parameter, on $k$, and on the constant $c_k$
of Theorem~\ref{thm:MR}; they are independent of the order and all other features of $H$.
\end{theorem}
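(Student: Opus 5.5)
The plan is to combine two ingredients: an \emph{entropy upper bound} on $\log Z(H)$ in terms of the vacancies $q_H(v)$, and the order-uniform \emph{lower bound}
\[
  \log Z(H)\ \ge\ \frac{|\EE|}{D}\bigl(\log D-(k+o_D(1))\log\log D\bigr)
\]
announced in the abstract (Theorem~\ref{thm:engine}). I will take that lower bound as the main input.

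For the upper bound, write $\Ent(M)=\log Z(H)$, since $M$ is uniform. Encode $M$ by the vector $(X_v)_{v\in\VV}$, where $X_v\in\dg(v)\cup\{\emptyset\}$ is the edge of $M$ covering $v$. Each edge of $M$ is recorded at all $k$ of its vertices. By Shearer's lemma, with the cover in which every edge coordinate appears $k$ times, this gives
\[
  k\,\Ent(M)\le\sum_v\Ent(X_v).
\]
Concretely, one can also argue directly: $M$ is determined by $(X_v)$, and the standard star-entropy argument yields $\log Z\le\frac1k\sum_v\Ent(X_v)$. This is presumably the ``elementary star-entropy upper bound'' the paper mentions.

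Next, I bound each $\Ent(X_v)$. The event $\{X_v\neq\emptyset\}$ has probability $1-q_H(v)$, and given that event $X_v$ takes at most $|\dg(v)|\le D$ values. Hence
\[
  \Ent(X_v)\le h(q_H(v))+(1-q_H(v))\log D,
\]
where $h$ is the binary entropy. Summing over $v$ and using concavity of $h$ gives
\[
  \log Z(H)\le\frac nk\Bigl(h(\qbar)+(1-\qbar)\log D\Bigr)\le\frac nk\bigl(\log 2+(1-\qbar)\log D\bigr).
\]

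Comparing the two bounds and substituting $|\EE|/D=\beta n/k$, the factor $n/k$ cancels:
\[
  \beta\bigl(\log D-(k+o(1))\log\log D\bigr)\le(1-\qbar)\log D+\log 2 .
\]
Dividing by $\log D$ gives
\[
  \qbar\le1-\beta+\beta(k+o(1))\frac{\log\log D}{\log D}+\frac{\log 2}{\log D}.
\]
Since $\log 2/\log D=o(\log\log D/\log D)$, this term is absorbed into the $o_D(1)$ coefficient. That is exactly the stated bound, with an error depending only on $D$, $k$ and the constant of the lower bound.

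For the corollary, a $d$-regular $H$ has $\beta=1$ and $D=d$, which gives $U_k(d)\le(k+o_d(1))\log\log d/\log d$.

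The main obstacle is the counting lower bound itself, if it must be proved here rather than cited. My plan for it follows the abstract. Sample each edge independently with probability $p$, chosen so that the sampled degrees are about $\lambda=\log D$ or a power of it. Delete every sampled edge meeting a vertex whose sampled degree exceeds $(1+\epsilon)\lambda$. Control only the expected total deleted mass, via Chernoff bounds and linearity of expectation, rather than applying a local lemma; this is what keeps the argument independent of the order. The surviving linear hypergraph $H'$ has maximum degree at most $(1+\epsilon)\lambda$, so its line graph has small codegrees. By the Molloy--Reed list edge-colouring theorem (Theorem~\ref{thm:MR}), $H'$ then has chromatic index $(1+c_k\lambda^{-1/k}\cdots)\lambda$, so some matching of size about $|E(H')|/\lambda\approx|\EE|/D$ exists.

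Turning this into a count is the final step. For each typical sample, record a large matching $M$ inside it, and bound how many samples can lead to the same $M$ by a first-moment (entropy) comparison. This loses about $k\log\log D$ per edge of the matching, relative to the trivial $\log D$ gained from choosing among roughly $D/\lambda$ options per edge. Balancing $\lambda$ against the deletion losses so that the total loss is $(k+o(1))\log\log D$ per edge is the delicate bookkeeping I expect to be hardest.
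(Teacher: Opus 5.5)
Your proposal is correct and follows the paper's proof of Theorem~\ref{thm:main} essentially verbatim: you combine the star-entropy bound of Theorem~\ref{thm:shearer} with the order-uniform count of Theorem~\ref{thm:engine}, rearrange to $\qbar\le1-\beta+(\beta C_D+\log2)/\log D$, absorb $\log2/\log D$, and use $0<\beta\le1$ to make the $o_D(1)$ uniform. Your closing sketch of the counting bound is looser than the paper's: it needs sampled degree $a=\lfloor(\log D)^k(\log\log D)^{5k}\rfloor$ rather than $\log D$, so that the Molloy--Reed loss is $o(1/\log D)$, and the transfer is just the union bound $\tfrac12\le\sum_F p^{|F|}$; but the paper's proof likewise takes Theorem~\ref{thm:engine} as a separate input.
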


The regular coefficient can be improved by returning to the earlier sampling-to-counting
architecture and using a quantitative near-perfect-matching theorem in place of
Pippenger's qualitative result.

\begin{theorem}[Quantitative nibble comparison]
\label{thm:nibble}
Fix $k\ge3$, and put $\rho_k=\max\{3,k-1\}$. Then
\[
  U_k(d)\ \le\ \bigl(\rho_k+o_d(1)\bigr)
  \frac{\log\log d}{\log d}.
\]
The bound is uniform in the order of the hypergraph.
\end{theorem}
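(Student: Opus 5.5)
Throughout, $H$ is $d$-regular, linear and $k$-uniform on $n$ vertices, and the plan converts a lower bound on $\log Z(H)$ into an upper bound on $\qbar(H)$.

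\emph{Step 1: reduction to counting.} For the upper direction I will use the star-entropy bound promised in the introduction. By Shearer/Han-type subadditivity over the stars $\dg(v)$, each edge lies in exactly $k$ stars, so
\[
  \log Z(H)=\Ent(M)\le\frac1k\sum_v\Ent(M\cap\dg(v)).
\]
Given vacancy $q_H(v)$, the variable $M\cap\dg(v)$ is empty or one of $d$ edges, so
\[
  \Ent(M\cap\dg(v))\le h(q_H(v))+(1-q_H(v))\log d.
\]
By concavity,
\[
  \log Z(H)\le\frac nk\Bigl(h(\qbar)+(1-\qbar)\log d\Bigr).
\]
It therefore suffices to prove
\[
  \log Z(H)\ge\frac nk\Bigl(\log d-(\rho_k+o(1))\log\log d\Bigr).
\]
Then
\[
  \qbar\log d\le h(\qbar)+(\rho_k+o(1))\log\log d,
\]
and since $h\le\log2$ this gives $\qbar\le(\rho_k+o(1))\log\log d/\log d$.

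\emph{Step 2: sampling to counting.} This is the Asratian--Kuzjurin route. Choose a density $p=d^{-1}(\log d)^{\gamma}$, and let $S$ be a random edge subset with each edge included independently with probability $p$. Every matching $N$ of size $m$ lies in $S$ with probability $p^m$. If $\PP(S\text{ contains a matching of size}\ge m)\ge1/2$, then
\[
  \sum_{|N|\ge m}p^{|N|}\ge1/2.
\]
Together with the trivial bound $|\{N:|N|=j\}|\le\binom{|E|}{j}$ for the large sizes, this gives a matching count of the form
\[
  \log Z\ge m\log(1/p)-O(1)=m\bigl(\log d-\gamma\log\log d\bigr)-O(1).
\]
I will take $m=(1-\varepsilon)n/k$, which requires $\varepsilon\log d=O(\log\log d)$ as well.

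\emph{Step 3: large matchings in $S$.} The sampled hypergraph $S$ has degrees concentrated near $(\log d)^{\gamma}$, with relative deviation about $(\log d)^{-\gamma/2}$, and codegree at most $1$. I will apply the Gould--Kelly near-perfect-matching theorem to $S$ after trimming the vertices of atypical degree. That theorem gives a matching covering all but a $\Delta_S^{-1/(k-1)}\cdot\mathrm{polylog}$ fraction of the vertices. Hence $\varepsilon\approx(\log d)^{-\gamma/(k-1)}$ together with the degree-irregularity loss. Choosing $\gamma$ just large enough that $\varepsilon\log d=o(\log\log d)$ is what yields the coefficient: $\gamma=k-1$ for $k\ge4$, while for $k=3$ the irregularity term $(\log d)^{-\gamma/2}$ forces $\gamma=3$, hence $\rho_k=\max\{3,k-1\}$. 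Uniformity in $n$ needs care. I will work with expectations rather than global concentration: bound the expected number of vertices uncovered by the Gould--Kelly matching on the trimmed graph, then use Markov's inequality so that the event has probability at least $1/2$.

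\emph{Main obstacle.} Step 3 is where I expect the difficulty. Gould--Kelly needs near-regularity hypotheses with explicit polylog error, and the sampled hypergraph is only approximately regular. I will first try deleting the edges at vertices whose sampled degree deviates by more than $(\log d)^{\gamma/2}\log\log d$. The expected deleted fraction is then $d^{-\omega(1)}$-small in the tail, and the remaining irregularity must fit inside the theorem's tolerance. Checking that this tolerance is compatible with $\gamma=\max\{3,k-1\}$ is the delicate bookkeeping step.
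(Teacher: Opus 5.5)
Your Steps 1 and 2 are the paper's: Shearer over the vertex stars, then the union-bound transfer $\#\{N:|N|\ge m\}\ge c\,p^{-m}$ (the binomial bound plays no role). The gap is in Step 3. Trimming enforces only an \emph{upper} degree bound. Deleting every sampled edge at an atypical vertex also removes edges from the other $k-1$ vertices of each such edge. Good vertices can therefore drop below $(1-\varepsilon)a$, and removing them in turn cascades further. So the trimmed sample is not $(n,a,\varepsilon)$-regular, and the Gould--Kelly theorem, whose hypothesis is two-sided, does not apply; your outline gives no control of this cascade. This one-sidedness is exactly why trimming works in the paper's Molloy--Reed route, which needs only $\Deltaop(G)\le(1+\delta)a$, and why it is not used for this theorem.

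The missing idea is that you do not need success probability $\frac12$. The transfer gives $\log N\ge m\log(1/p)+\log c$, so any $c$ with $\log(1/c)=o(n\log\log d)$ is harmless. The paper takes $\varepsilon=\sqrt{12\log(kd)/a}$. Then each event $\{|D_v-a|>\varepsilon a\}$ has probability at most $2(kd)^{-4}$ and is independent of all but at most $(k-1)d$ of the others. The lower-bound form of the local lemma now shows that \emph{all} sampled degrees lie in $[(1-\varepsilon)a,(1+\varepsilon)a]$ with probability at least $(1-x)^n\ge e^{-2nx}$, where $x=((k-1)d+1)^{-1}$. This probability is exponentially small in $n$, but it costs only $2nx=O(n/d)$ in $\log N$. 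Uniformity in the order therefore comes for free, with no trimming at all.

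This is also where the constant $3$ comes from, and your bookkeeping does not actually produce it. With your tolerance $\varepsilon\approx(\log d)^{-\gamma/2}\log\log d$, the constraint $B\le 1/\varepsilon$ with $B=(\log d)^{1+\sigma}$ needs only $\gamma>2$, and for $k=3$ so does $B\le a^{1/2}$. Your scheme, if it worked, would thus give coefficient $2$ at $k=3$, not $3$. In the paper, the local lemma needs per-vertex failure probability $O(1/d)$. That forces the extra $\sqrt{\log d}$ in $\varepsilon$, so $\varepsilon^{-1}=(\log d)^{(t-1)/2+o(1)}$ and hence $t>3$. (At your threshold the per-vertex tail is $\exp(-\Theta((\log\log d)^2))$, not $d^{-\omega(1)}$, although that would still suffice for the deleted mass.)

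Finally, $\gamma=\rho_k$ itself is not admissible. The quantity $\nu\log d$, with $\nu\le B^{-1+\theta}(\log a)^A$, is $o(\log\log d)$ only when $B\ge(\log d)^{1+\sigma}$ for some $\sigma>0$ and $\theta$ is small. So one takes $t\in(\rho_k,\rho_k+\zeta)$ and lets $\zeta\to0$, as the paper does.
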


For $k=3$ this agrees with the coefficient in Theorem~\ref{thm:main}; for every
$k\ge4$ it is smaller. It applies only to the regular supremum $U_k(d)$, whereas
Theorem~\ref{thm:main} gives the maximum-degree bound and Corollary~\ref{cor:manylarge}
gives a size-resolved count.

By Proposition~\ref{prop:avgid}, every finite $d$-regular linear $k$-uniform $H$ with
$k\ge3$ satisfies
\[
  \EX|M|\ \ge\ \Bigl(1-\bigl(\rho_k+o_d(1)\bigr)
  \frac{\log\log d}{\log d}\Bigr)\frac{n}{k}.
\]
The case $k=2$ is contained in, and far weaker than, the theorem of Kahn and Kim
\cite{KahnKim98}; in fact their pointwise result gives
$U_2(d)=(1+o_d(1))d^{-1/2}$. For bipartite $d$-regular graphs, Schrijver's lower bound on perfect
matchings \cite{Schrijver} also gives a stronger order-uniform count than the one used
here. At arbitrary activity, Davies, Jenssen, Perkins and Roberts \cite{DJPR} show that
$K_{d,d}$ maximizes the dimer density among $d$-regular graphs; at activity one this
identifies the graph-case minimizers of average vacancy, complementing the pointwise
asymptotics of Kahn and Kim.

Theorem~\ref{thm:main} is a statement about a global average and carries no information
about any individual vertex, so it neither revives Kahn's pointwise law nor bears on Lee's
refinement of it \cite[Conjecture 5.2]{Lee}. What it does say is that the bad vertices
produced in \cite{Lee} can never be made plentiful.

\begin{corollary}[Density of high-vacancy vertices]
\label{cor:nodensity}
Fix $k\ge2$ and $c>0$. Every finite $d$-regular linear $k$-uniform hypergraph satisfies
\[
  \frac1n\bigl|\{v:q_H(v)\ge c\}\bigr|
  \ \le\ \frac{\qbar(H)}c.
\]
For $k\ge3$, the right-hand side is at most
\[
  \biggl(\frac{\rho_k}{c}+o_d(1)\biggr)
       \frac{\log\log d}{\log d},
\]
where $\rho_k=\max\{3,k-1\}$. For $k=2$, the same conclusion holds with $2$ in
place of $\rho_k$ by Theorem~\ref{thm:main}. In particular, no fixed positive density
of vertices can have vacancy bounded away from zero as $d\to\infty$.
\end{corollary}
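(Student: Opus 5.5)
The first inequality is Markov's inequality applied to the uniform distribution on vertices. Put $S=\{v:q_H(v)\ge c\}$. Since every $q_H(v)\ge 0$,
\[
  \qbar(H)=\frac1n\sum_{v\in\VV}q_H(v)\ \ge\ \frac1n\sum_{v\in S}q_H(v)\ \ge\ \frac{c\,|S|}{n}.
\]
Dividing by $c$ gives the stated bound. This part uses no hypothesis on $H$ other than finiteness.

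For $k\ge3$ we bound $\qbar(H)\le U_k(d)$, which holds by the definition of $U_k(d)$ as a supremum over finite $d$-regular linear $k$-uniform hypergraphs. Theorem~\ref{thm:nibble} then gives
\[
  \frac{\qbar(H)}{c}\ \le\ \frac{1}{c}\bigl(\rho_k+o_d(1)\bigr)\frac{\log\log d}{\log d}.
\]
The only thing to check is the form of the error term. Because $c$ is fixed, $o_d(1)/c$ is still $o_d(1)$, so the right-hand side equals $\bigl(\rho_k/c+o_d(1)\bigr)\frac{\log\log d}{\log d}$. Uniformity in $n$ carries over from Theorem~\ref{thm:nibble}.

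For $k=2$ we use Theorem~\ref{thm:main} in its regular form, $U_2(d)\le(2+o_d(1))\log\log d/\log d$. This is where the coefficient $k=2$ comes from. Here a $d$-regular $H$ has $D=d$ and $\beta(H)=1$, because $k|\EE|=nd$. Dividing by $c$ as before gives the claim with $2$ in place of $\rho_k$.

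The final sentence follows because $\log\log d/\log d\to0$. So for any fixed $\delta>0$, the proportion of vertices with $q_H(v)\ge c$ is less than $\delta$ once $d$ is large enough, uniformly over all such $H$. There is no real obstacle here. The only point needing care is that $c$ is fixed before $d\to\infty$, so that dividing by $c$ keeps the $o_d(1)$ term in the same form.
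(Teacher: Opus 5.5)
Your proposal is correct and is essentially the paper's proof: Markov's inequality applied to the nonnegative numbers $q_H(v)$, then Theorem~\ref{thm:nibble} for $k\ge3$ and the regular case $\beta(H)=1$ of Theorem~\ref{thm:main} for $k=2$. Your remark that the fixed $c$ can be absorbed into the $o_d(1)$ term is exactly the bookkeeping the paper leaves implicit.
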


\begin{proof}
The first inequality is Markov's inequality applied to the nonnegative numbers $q_H(v)$.
Apply Theorem~\ref{thm:nibble} when $k\ge3$ and Theorem~\ref{thm:main} when $k=2$.
\end{proof}

Thus no iteration of the gadget in \cite[\S2]{Lee}, or any other construction, can
produce for arbitrarily large $d$ a $d$-regular linear $k$-graph with a positive density
of vertices whose vacancy probability stays bounded away from zero.

The maximum-degree form also gives a near-regular extension.

\begin{corollary}[Near-regular hypergraphs]
\label{cor:nearregular}
Fix $k\ge2$. Suppose that $H$ is finite, nonempty, linear and $k$-uniform, and that for
some $d\to\infty$ and $0\le\gamma\le1/2$ all its degrees lie in
$[(1-\gamma)d,(1+\gamma)d]$. Then, uniformly over $H$ and $\gamma$,
\[
  \qbar(H)\ \le\ \frac{2\gamma}{1+\gamma}
  +\biggl(k\frac{1-\gamma}{1+\gamma}+o_d(1)\biggr)
       \frac{\log\log d}{\log d}.
\]
In particular $\qbar(H)\to0$ whenever $\gamma=o(1)$.
\end{corollary}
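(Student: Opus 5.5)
We derive Corollary~\ref{cor:nearregular} directly from Theorem~\ref{thm:main}: bound $D=\Deltaop(H)$ and $\beta(H)$ in terms of $d$ and $\gamma$, then substitute.

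First, the maximum degree satisfies $D\le(1+\gamma)d$. Since every vertex has degree at least $(1-\gamma)d$ and $\gamma\le1/2$, also $D\ge(1-\gamma)d\ge d/2$. So $D\to\infty$ as $d\to\infty$, uniformly in $\gamma\in[0,1/2]$. This lets us replace $o_D(1)$ by $o_d(1)$.

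Second, double counting gives $k|\EE|=\sum_v|\dg(v)|\ge n(1-\gamma)d$. Together with $D\le(1+\gamma)d$ this yields
\[
  \beta:=\beta(H)=\frac{k|\EE|}{nD}\ \ge\ \frac{1-\gamma}{1+\gamma},
\]
and trivially $\beta\le1$. Hence
\[
  1-\beta\le 1-\frac{1-\gamma}{1+\gamma}=\frac{2\gamma}{1+\gamma}.
\]

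The one point needing care is that $\beta$ appears with opposite signs in the bound of Theorem~\ref{thm:main}. The right-hand side is
\[
  1-\beta+\beta k\,L(D)+o_D(1)L(D),\qquad L(x):=\frac{\log\log x}{\log x}.
\]
Its derivative in $\beta$ is $-1+kL(D)$, which is negative once $D$ is large, because $L(D)\to0$. So the bound is decreasing in $\beta$, and we may substitute the lower bound $\beta\ge\frac{1-\gamma}{1+\gamma}$. This gives
\[
  \qbar(H)\le \frac{2\gamma}{1+\gamma}+\Bigl(k\frac{1-\gamma}{1+\gamma}+o_d(1)\Bigr)L(D).
\]
(Alternatively, one could bound $\beta k\le k$, but that loses the stated coefficient.)

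It remains to replace $L(D)$ by $L(d)$. Since $D\in[d/2,\,3d/2]$, we have $\log D=\log d+O(1)$ and $\log\log D=\log\log d+o(1)$. Hence $L(D)=(1+o_d(1))L(d)$, uniformly in $\gamma$. The multiplicative $1+o_d(1)$ factor applied to the bounded coefficient $k\frac{1-\gamma}{1+\gamma}\le k$ is absorbed into the additive $o_d(1)$.

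Finally, if $\gamma=o(1)$, then $\frac{2\gamma}{1+\gamma}\to0$, and $L(d)\to0$ as $d\to\infty$, so $\qbar(H)\to0$.
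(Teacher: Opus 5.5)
Your proposal is correct and follows the paper's proof. You bound $\beta(H)\ge(1-\gamma)/(1+\gamma)$ from the minimum degree and $D\le(1+\gamma)d$, note the bound is decreasing in $\beta$ for large $D$, substitute, and use $\log D=\log d+O(1)$ uniformly in $\gamma\in[0,1/2]$; the only difference is that the paper applies monotonicity to the exact bound \eqref{eq:final} (slope $-1+C_D/\log D$), which avoids the step, implicit in your version, of first replacing the $o_D(1)$ of Theorem~\ref{thm:main} by a $\beta$-independent uniform bound before differentiating.
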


\begin{proof}
With $D=\Deltaop(H)$, the average degree is at least $(1-\gamma)d$ and
$D\le(1+\gamma)d$, so $\beta(H)\ge b:=(1-\gamma)/(1+\gamma)$. The exact bound
\eqref{eq:final} is decreasing in $\beta$ for all sufficiently large $D$, because
$C_D=o(\log D)$. Substitute $\beta=b$ there, and note that
$\log D=\log d+O(1)$ uniformly for $0\le\gamma\le1/2$.
\end{proof}

The qualitative conclusion for $\gamma=o(1)$ is also contained in the almost-regular
extension of Asratian and Kuzjurin \cite[\S3]{AsratianKuzjurin}. The new content here is
the displayed rate, uniform quantitative dependence on $\gamma$, and the underlying
maximum-degree statement, which requires no lower-degree hypothesis.

Theorem~\ref{thm:main} is deduced from an entropy upper bound on $Z(H)$, which is
classical, together with the following quantitative lower bound. Asratian--Kuzjurin
\cite[Corollary 1]{AsratianKuzjurin} gives the stronger-looking
$(1+o(1))(n/k)\log d$ asymptotic for almost-regular small-codegree sequences, but without the
displayed error rate or the maximum-degree formulation needed here.

\begin{theorem}[Uniform enumeration of matchings]
\label{thm:engine}
Fix $k\ge2$. There is a function $C_D=C_{D,k}$, depending also on the constant $c_k$ of
Theorem~\ref{thm:MR}, with
\[
  C_D\ =\ \bigl(k+o_D(1)\bigr)\log\log D,
\]
such that every finite nonempty linear $k$-uniform hypergraph $H$ of maximum degree $D$
satisfies, for all sufficiently large $D$,
\[
  \log Z(H)\ \ge\ \frac{|\EE|}{D}\bigl(\log D-C_D\bigr).
\]
For a $d$-regular $H$ this becomes
\[
  \log Z(H)\ \ge\ \frac{n}{k}
  \bigl(\log d-(k+o_d(1))\log\log d\bigr).
\]
No relation between $n$ and $D$ is assumed.
\end{theorem}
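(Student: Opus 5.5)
Following the outline in the abstract, the plan has four steps: sample edges at a suitable density, trim edges at vertices of unusually large sampled degree, invoke the Molloy--Reed list edge-colouring theorem (Theorem~\ref{thm:MR}) on what remains, and finish with a counting argument over colour classes. Since Theorem~\ref{thm:MR} was not shown above, I treat it as a statement of the form: a linear $k$-uniform hypergraph of maximum degree $\Delta$ has list chromatic index at most $\Delta+c_k\Delta^{1-1/k}(\log\Delta)^{4}$. Only a bound $\Delta(1+o(1))$ with explicit polylog error matters. Throughout I avoid any union bound over vertices, so that nothing depends on $n$.

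\emph{Step 1 (sampling).} Choose $p=(\log D)^{a}/D$, with $a$ to be tuned, and let $S$ keep each edge independently with probability $p$. Each sampled degree $X_v\sim\Bin(|\dg(v)|,p)$ has mean at most $\mu=(\log D)^a$. Set a threshold $T=\mu+\mu^{1/2}\log D$. By a Chernoff bound, $\PP(X_v>T)\le e^{-\Omega((\log D)^2)}$.

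\emph{Step 2 (trimming).} Delete from $S$ every edge containing a vertex $v$ with $X_v>T$, obtaining $S'$. I would control only the expected deleted mass. This is at most $\sum_v \EX[X_v\mathbf 1\{X_v>T\}]\le n\mu e^{-\Omega((\log D)^2)}$. Since $n\le k|\EE|$ up to isolated vertices, this is $o(|\EE|p)$. Hence $\EX|S'|\ge(1-o(1))p|\EE|$, and we may fix a realization with $|S'|\ge(1-o(1))p|\EE|$. Now $S'$ has maximum degree at most $T$.

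\emph{Step 3 (colouring and counting).} By Theorem~\ref{thm:MR}, $S'$ is properly edge-colourable, and the point is to count many matchings rather than one. I would run a direct counting argument: order the edges of $S'$ and note that a random assignment of $L=T(1+o(1))$ colours gives each colour class a matching. The cleaner route is to apply the list version to lower-bound the number of proper colourings, or equivalently to build matchings greedily. Alternatively, the count can come from the product structure: $Z(H)\ge\prod$ over colour classes is weak, so instead I would use the list colouring as follows. A proper $L$-colouring partitions $S'$ into $L$ matchings, one of which, $M_0$, has size at least $|S'|/L\approx|\EE|/D$. Every subset of $M_0$ is a matching, and so are matchings formed from the original $H$-edges. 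To get the $\log D$ factor per edge, I count choices differently: for each of many disjoint samplings, the resulting matchings are distinct. Concretely, I would compare $\EX[\#\text{matchings of size }m\text{ inside }S]=p^{m}N_m$ with the probability that $S$ contains one. This gives $N_m\ge p^{-m}\PP(\exists)\ge p^{-m}/2$, so $\log Z\ge m\log(1/p)-O(1)=m(\log D-a\log\log D)$. With $m=(1-o(1))|S'|/L\approx|\EE|/D\cdot(1-O(\mu^{-1/2}\log D))$, this is the Asratian--Kuzjurin first-moment transfer.

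\emph{Step 4 (tuning).} The losses are $a\log\log D$ from $\log(1/p)$, together with the relative loss $\mu^{-1/2}\log D+c_k T^{-1/k}\mathrm{polylog}$ multiplied by $\log D$. Taking $a$ so that $(\log D)^{1-a/k}\cdot\mathrm{polylog}=o(\log\log D)$ forces $a\approx k$ plus lower-order terms, which gives $C_D=(k+o(1))\log\log D$.

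The main obstacle is the first-moment transfer when $\PP(\exists)$ is only bounded below in expectation, and making the exponent $k$ come out exactly while absorbing the polylog factors of Theorem~\ref{thm:MR}. I would handle the existence probability by requiring that the trimmed deleted mass be small with probability at least $1/2$ via Markov's inequality. The regular case then follows from $|\EE|=nd/k$.
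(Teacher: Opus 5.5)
\paragraph{The threshold in Step~4 is too loose for $k=2,3$.} Your architecture matches the paper's: sample at $p=\mu/D$, trim vertices of large sampled degree, apply Theorem~\ref{thm:MR}, pigeonhole a colour class, then use the first-moment transfer $N_{\ge m}\ge\PP(\exists)\,p^{-m}$. But Step~4 does not give the coefficient $k$ when $k\in\{2,3\}$.

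Your threshold $T=\mu+\mu^{1/2}\log D$ overshoots the mean by the relative amount $\mu^{-1/2}\log D$. So the colour class has size only about $(1-\eta)|\EE|/D$ with $\eta\gtrsim\mu^{-1/2}\log D$, and the exponent loses $\eta\log D\gtrsim\mu^{-1/2}(\log D)^2$. You list this term but impose a condition only on the Molloy--Reed term. With $\mu=(\log D)^{k+o(1)}$ it equals $(\log D)^{2-k/2+o(1)}$. That is $(\log D)^{1+o(1)}$ for $k=2$ and $(\log D)^{1/2+o(1)}$ for $k=3$, both far above $\log\log D$. Making it $o(\log\log D)$ forces $\mu\ge(\log D)^{4-o(1)}$. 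As written you therefore get $C_D=(\max\{k,4\}+o(1))\log\log D$, which is correct only for $k\ge4$.

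\paragraph{The fix, and the idea it requires.} The remedy is an overshoot $\delta$ with $\delta\log D=O(1)$, such as the paper's $\tau=(1+1/\log D)\mu$. But then the per-vertex Chernoff tail is only $\varepsilon_D=\exp(-\Theta(\mu/(\log D)^2))$. For $k=2$ and $\mu=(\log D)^2(\log\log D)^{10}$ this is $\exp(-\Theta((\log\log D)^{10}))$, which is much larger than $1/D$.

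Your trimming estimate charges every non-isolated vertex the maximal mean $\mu$. It therefore bounds the deleted mass by $k|\EE|\mu\varepsilon_D=kD\varepsilon_D\cdot p|\EE|$, which is no longer $o(p|\EE|)$. The factor $D$ was absorbed before only because your loose threshold made the tail $e^{-\Omega((\log D)^2)}$. Tightening the threshold alone would rescue $k=3$: with $\mu=(\log D)^3(\log\log D)^{15}$ the tail is $D^{-\omega(1)}$. It does not rescue $k=2$.

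The missing ingredient is to weight each vertex by its own mean. The binomial size-bias identity $\EX[X\mathbf 1\{X>\tau\}]=Np\,\PP(1+\Bin(N-1,p)>\tau)$, together with $\Bin(d_v-1,p)\preceq\Bin(D-1,p)$, gives $\EX[D_v\mathbf 1\{D_v>\tau\}]\le d_vp\,\varepsilon_D$. Summing with $\sum_vd_v=k|\EE|$ bounds the expected deleted mass by $pk|\EE|\varepsilon_D$, with no factor $D$. This identity also justifies your per-vertex bound, which is not simply mean times tail.

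\paragraph{Probability bound for the transfer.} The transfer needs a probability, not a fixed realization obtained from an expectation bound. Use Markov's inequality for the deleted mass together with a Chernoff lower tail for $|S|$. The latter is order-free because $|\EE|\ge D$ gives $p|\EE|\ge\mu$. Then absorb the additive $-\log2$ using $|\EE|/D\ge1$.
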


Bennett and Frieze \cite[Theorem 4(b)]{BennettFrieze} prove that a $k$-uniform
$D$-regular hypergraph on $N$
vertices in which every pair of vertices lies in at most $D\varphi$ common edges has
\[
  \#\{\text{matchings}\}\ =\ \bigl((1+o(1))D e^{1-k}\bigr)^{N/k},
\]
\emph{provided} $\varphi=o(\log^{-10}N)$ and $k=o(\log \varphi^{-1})$. For a linear
hypergraph $\varphi=1/D$, so their hypothesis reads $(\log N)^{10}=o(D)$. Where it applies,
their count is strictly sharper than Theorem~\ref{thm:engine}, the error term being a
constant rather than $\log\log D$; combined with Theorem~\ref{thm:shearer} below it already
yields $\qbar\le(k-1+\log2+o(1))/\log d$ in that regime. But the hypothesis
$(\log N)^{10}=o(D)$ gives no information when $N$ grows arbitrarily fast relative to
$D$. Theorem~\ref{thm:engine} sacrifices the sharp constant to obtain an error independent
of the order.

\subsection{How the proof goes}
\label{subsec:overview}

The two theorems are joined by an entropy inequality. Applying Shearer's inequality to the
$n$ vertex stars of $H$ --- each edge coordinate is covered exactly $k$ times, once by each
of its vertices --- bounds the entropy of a uniform matching by a sum of star entropies,
each of which splits according to whether the star's centre is covered. This gives
\[
  \log Z(H)\ \le\ \frac nk\Bigl[h\bigl(\qbar(H)\bigr)
    +\bigl(1-\qbar(H)\bigr)\log D\Bigr]
\]
(Theorem~\ref{thm:shearer}), where $h$ is the binary entropy. Since $h\le\log2$, any lower
bound of the form $\log Z(H)\ge(|\EE|/D)(\log D-C)$ controls $\qbar(H)$ in terms of the
normalized average degree $\beta=k|\EE|/(nD)$. Thus the problem reduces to counting
matchings.

The random-subhypergraph comparison already underlies the Asratian--Kuzjurin count
\cite{AsratianKuzjurin}: their polynomial-density sample is handled by the local lemma
and Pippenger's theorem, followed by a first-moment transfer to a count. We use the same
broad scheme---find a large matching in a sparse sample with positive probability, then
convert that event into a count---but make its loss explicit and independent of the
order. The new ingredients are a polylogarithmic-density sample, Molloy--Reed colour
classes, and aggregate degree trimming. Write $D=\Deltaop(H)$ and retain each edge
independently with probability
$p=a/D$, where
\[
  a=\bigl\lfloor(\log D)^k(\log\log D)^{5k}\bigr\rfloor.
\]
If the sample had maximum degree at most about $a$,
then the near-optimal list edge-colouring theorem of Molloy and Reed
(Theorem~\ref{thm:MR}) would properly colour it with $(1+o(1))a$ colours; every colour
class is a matching. Passing from ``$S$ contains a matching of size at least $s$
with probability at least $\tfrac12$'' to ``$H$ has at least $\tfrac12 p^{-s}$ matchings''
is then a one-line union bound (Lemma~\ref{lem:transfer}). The matching has size close to
$|\EE|/D$, and $\log a=(k+o(1))\log\log D$, which gives Theorem~\ref{thm:engine}.

Some sampled degrees exceed $(1+\delta)a$, so every sampled edge meeting such a vertex
must be discarded. Controlling all vertex degrees simultaneously would introduce a factor
of $n$. A Chernoff bound gives
$\PP(D_v>(1+\delta)a)\le\varepsilon_D$ for each fixed vertex, but the resulting union
bound $n\varepsilon_D$ is useless when $n$ is unrestricted.

Instead we charge only the \emph{aggregate} deleted mass. If $R$ is the number of deleted
sampled edges and $\tau=(1+\delta)a$, then
$R\le\sum_vD_v\mathbf1\{D_v>\tau\}$. A vertex of degree $d_v\le D$ has
$D_v\sim\Bin(d_v,p)$. The binomial size-bias identity and stochastic domination by
$\Bin(D-1,p)$ bound the expectation of the whole sum by
$pk|\EE|\varepsilon_D$. Markov's inequality then controls $R$ with failure probability
$\sqrt{\varepsilon_D}$, independent of $n$. The lower tail of $|S|$ is a single further
event, not a union over vertices. This aggregate estimate supplies the order-uniformity.
The scale of $a$ makes the trimming and colouring losses $O(1/\log D)$ while
$\log a=(k+o(1))\log\log D$.

\subsection{Scope}
\label{subsec:scope}

The bound is not expected to be sharp. Lee's lower bound $\qbar(H)\ge1/(d+1)$ and the
scale $d^{-1/k}$ suggested by Kahn's original pointwise prediction leave a wide gap, and
we do not determine the order of $U_k(d)$. Linearity is essential to the colouring input;
individual vertices, other activities, variance, and bounded codegree lie outside the
argument. Since the constant $c_k$ in Theorem~\ref{thm:MR} is not explicit, the proof also
supplies no numerical threshold. Appendix~\ref{app:rate} illustrates the poor finite-scale
behaviour of one simple parameter choice.

\section{Notation, an identity, and the hypotheses}
\label{sec:setup}

For $x\in[0,1]$ let
\[
  h(x)\ :=\ -x\log x-(1-x)\log(1-x)
\]
be the binary entropy, with $h(0)=h(1)=0$; $h$ is concave on $[0,1]$ and $h(x)\le\log2$.
For a discrete random variable $Y$ we write $\Ent(Y)$ for its Shannon entropy in nats, and
for a random vector $X=(X_i)_{i\in I}$ and $A\subseteq I$ we write $X_A=(X_i)_{i\in A}$. We
write $\Bin(N,p)$ for the binomial distribution and $\Deltaop(G)$ for the maximum degree of
a hypergraph $G$. Finally $\chil(G)$ denotes the list chromatic index of $G$, i.e.\ the
least $\ell$ such that $G$ admits a proper edge colouring from any assignment of lists of
size $\ell$ to its edges; a proper edge colouring of a hypergraph is one in which
intersecting edges receive distinct colours, so that every colour class is a matching.

Double counting incidences gives
\begin{equation}
\label{eq:edgecount}
  \sum_{v\in\VV}|\dg(v)|\ =\ k|\EE|.
\end{equation}
In particular, a $d$-regular $k$-uniform hypergraph has $|\EE|=nd/k$.

The following identity is what makes ``average vacancy'' and ``expected matching size''
interchangeable.

\begin{proposition}
\label{prop:avgid}
For every finite $k$-uniform hypergraph $H$ on $n$ vertices and $M$ uniform on
$\Mat(H)$,
\[
  \sum_{v\in \VV} q_H(v)\ =\ n-k\,\EX|M|,
  \qquad\text{equivalently}\qquad
  \EX|M|\ =\ \frac{n}{k}\bigl(1-\qbar(H)\bigr).
\]
\end{proposition}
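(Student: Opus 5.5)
The plan is to double count the covered vertices, pointwise for each matching, and then take expectations. For a fixed matching $m\in\Mat(H)$, the edges of $m$ are pairwise disjoint and each has exactly $k$ vertices because $H$ is $k$-uniform. So the set of vertices covered by $m$ is a disjoint union of $|m|$ sets of size $k$, which gives
\[
  \sum_{v\in\VV}\mathbf 1\{v\text{ is covered by }m\}\ =\ k|m|.
\]
This is the matching analogue of \eqref{eq:edgecount}; the disjointness of the edges of $m$ is what replaces degrees by $0/1$ indicators.

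Next apply this with $m=M$ uniform on $\Mat(H)$ and take expectations, using linearity. Since $\PP(v\text{ covered by }M)=1-q_H(v)$, we get
\[
  \sum_{v}\bigl(1-q_H(v)\bigr)=k\,\EX|M|,
\]
that is, $\sum_v q_H(v)=n-k\,\EX|M|$. Dividing by $n$ and using the definition $\qbar(H)=\frac1n\sum_v q_H(v)$ gives the equivalent form $\EX|M|=\frac nk\bigl(1-\qbar(H)\bigr)$.

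I do not expect a real obstacle. The only points to note are that the identity holds matching by matching, so the choice of the uniform law plays no role beyond making the expectations finite. It also holds for the empty matching, where both sides are $0$. Neither linearity of $H$ nor any degree assumption is used.
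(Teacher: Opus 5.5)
Your proof is correct and matches the paper's argument: the paper also observes that each matching covers exactly $k|M|$ vertices by disjointness and $k$-uniformity, so $\sum_v \mathbf{1}\{v\text{ uncovered}\}=n-k|M|$ holds for every realisation, and then takes expectations. Your remark that neither linearity nor the specific (uniform) law is used is also accurate.
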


\begin{proof}
Every matching covers exactly $k|M|$ vertices, since its edges are pairwise disjoint and
each has $k$ vertices. Hence, for each realisation,
$\sum_{v}\mathbf{1}\{v\text{ uncovered}\}=n-k|M|$. Take expectations and use linearity.
\end{proof}

\begin{proposition}[A universal pointwise lower bound]
\label{prop:pointlower}
For every finite hypergraph $H$, every vertex $v$ and a uniform matching $M$,
\[
  q_H(v)\ \ge\ \frac1{|\dg(v)|+1}.
\]
No uniformity, regularity or linearity is required.
\end{proposition}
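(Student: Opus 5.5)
The plan is an injection (or fractional-charging) argument comparing matchings that cover $v$ with matchings that leave $v$ uncovered. Write $\Mat_0$ for the matchings not covering $v$. For each $e\in\dg(v)$, write $\Mat_e$ for the matchings containing $e$. Since any matching contains at most one edge through $v$, these sets partition $\Mat(H)$:
\[
  \Mat(H)\ =\ \Mat_0\ \sqcup\ \bigsqcup_{e\in\dg(v)}\Mat_e .
\]

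The key step is the map $\Mat_e\to\Mat_0$ given by $N\mapsto N\setminus\{e\}$. The image is a matching because subsets of matchings are matchings. It avoids $v$ because $e$ was the unique edge of $N$ through $v$. The map is injective because $N$ is recovered as $(N\setminus\{e\})\cup\{e\}$. Hence $|\Mat_e|\le|\Mat_0|$ for every $e\in\dg(v)$. Summing over the partition gives
\[
  Z(H)\ \le\ \bigl(|\dg(v)|+1\bigr)\,|\Mat_0| .
\]
Dividing by $Z(H)$ yields $q_H(v)=|\Mat_0|/Z(H)\ge 1/(|\dg(v)|+1)$.

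The only points to check are that the empty matching is counted, so $\Mat_0\neq\emptyset$ and the statement holds even when $\dg(v)=\emptyset$ (then $q_H(v)=1$), and that nothing about uniformity, regularity or linearity was used. I do not expect a real obstacle here. The one step needing care is the injectivity and well-definedness of the removal map, which is immediate because we remove a fixed edge $e$.
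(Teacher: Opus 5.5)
Your proof is correct and is essentially the paper's own argument. The paper also uses the edge-deletion injection from matchings containing $e_i$ into matchings leaving $v$ uncovered, phrased probabilistically as $1-q_H(v)=\sum_{i}\PP(e_i\in M)\le |\dg(v)|\,q_H(v)$; this is your inequality $Z(H)\le(|\dg(v)|+1)\,|\Mat_0|$ divided by $Z(H)$.
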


\begin{proof}
Write $\dg(v)=\{e_1,\ldots,e_r\}$. For each $i$, deleting $e_i$ is an injection from
the matchings containing $e_i$ into the matchings that leave $v$ uncovered. The events
$\{e_i\in M\}$ are disjoint and their union is the event that $v$ is covered. Therefore
\[
  1-q_H(v)=\sum_{i=1}^r\PP(e_i\in M)\le r q_H(v),
\]
which rearranges to the claim.
\end{proof}

Thus $\qbar(H)\le\varepsilon$ and $\EX|M|\ge(1-\varepsilon)n/k$ are the same statement.
For a $d$-regular hypergraph, Proposition~\ref{prop:pointlower} gives
$\qbar(H)\ge1/(d+1)$, the matching-size form of \cite[Corollary 5.1]{Lee}.

For a concrete small example, the Fano plane is a $3$-regular linear $3$-graph with
$Z=8$. Its seven nonempty matchings are its seven single edges, so
$\EX|M|=7/8$ and Proposition~\ref{prop:avgid} gives $\qbar=5/8$. The asymptotic theorem
is not intended to be informative at this scale.

If $H$ is a disjoint union of $H_1,\ldots,H_r$, then
$\Mat(H)=\prod_i\Mat(H_i)$ and $\qbar(H)$ is the vertex-weighted average of the
$\qbar(H_i)$. This proves the connected-component reduction used in the introduction.

\subsection{Where the hypotheses are used}
\label{subsec:hypotheses}

Finiteness is used by the entropy and counting arguments. We take $\EE$ to be a set, so
parallel copies are not part of the model; allowing them would change both linearity and
the matching count. Exact regularity is not needed. In the trimming argument a vertex of
degree $d_v\le D$ has sampled degree $\Bin(d_v,p)$; after size biasing, the relevant tail is
dominated through $\Bin(d_v-1,p)\preceq\Bin(D-1,p)$. This is what yields the
maximum-degree and near-regular forms of Theorem~\ref{thm:main}.

Linearity is used only through Theorem~\ref{thm:MR}. Hypergraphs of larger codegree would
require a different colouring input and new bookkeeping. Finally, $M$ is uniform on all
of $\Mat(H)$, with the empty matching weighted like every other matching, and the degree
parameter tends to infinity.

\section{The entropy upper bound}
\label{sec:shearer}

The upper bound on $Z(H)$ is Shearer's inequality applied to the vertex stars, an argument
with a long lineage in this role \cite{Shearer,Radhakrishnan,LinialLuria}. Bennett and
Frieze likewise use an entropy upper bound in their enumeration theorem
\cite{BennettFrieze}. We include the short application here; Shearer's inequality itself
is the classical input stated next.

\begin{lemma}[Shearer]
\label{lem:shearer}
Let $X=(X_i)_{i\in I}$ be a random vector with $I$ finite, and let $\mathcal{A}$ be a
family of subsets of $I$ such that every $i\in I$ lies in at least $t\ge 1$ members of
$\mathcal{A}$ (with multiplicity). Then
\[
  t\,\Ent(X)\ \le\ \sum_{A\in\mathcal{A}}\Ent(X_A).
\]
\end{lemma}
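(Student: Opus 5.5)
The plan is to prove Shearer's inequality by induction via the chain rule, using that conditioning on less information can only increase entropy. Fix an ordering $I=\{1,\dots,N\}$. The chain rule gives
\[
  \Ent(X)=\sum_{i=1}^N \Ent\bigl(X_i\mid X_1,\dots,X_{i-1}\bigr).
\]
For a fixed $A\in\mathcal{A}$, list its elements in increasing order, $A=\{i_1<\dots<i_m\}$. The chain rule applied inside $A$ gives
\[
  \Ent(X_A)=\sum_{j=1}^m \Ent\bigl(X_{i_j}\mid X_{i_1},\dots,X_{i_{j-1}}\bigr).
\]
The conditioning set $\{i_1,\dots,i_{j-1}\}$ is a subset of $\{1,\dots,i_j-1\}$. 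Since conditioning does not increase entropy, each term satisfies
\[
  \Ent\bigl(X_{i_j}\mid X_{i_1},\dots,X_{i_{j-1}}\bigr)\ \ge\ \Ent\bigl(X_{i_j}\mid X_{<i_j}\bigr).
\]
Hence
\[
  \Ent(X_A)\ \ge\ \sum_{i\in A}\Ent\bigl(X_i\mid X_{<i}\bigr).
\]

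Next I sum this over $A\in\mathcal{A}$, counted with multiplicity. The index $i$ contributes the term $\Ent(X_i\mid X_{<i})$ once for each member of $\mathcal{A}$ containing it, so at least $t$ times. Every conditional entropy is nonnegative, because $X$ is a discrete random vector. Therefore the extra occurrences can only increase the sum, and
\[
  \sum_{A\in\mathcal{A}}\Ent(X_A)\ \ge\ t\sum_{i\in I}\Ent\bigl(X_i\mid X_{<i}\bigr)=t\,\Ent(X).
\]

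The only ingredients are the chain rule, the monotonicity $\Ent(Y\mid Z,W)\le\Ent(Y\mid Z)$, and nonnegativity of discrete entropy. The monotonicity follows from Jensen's inequality, or equivalently from nonnegativity of conditional mutual information. There is no real obstacle. The one point needing care is to use a single global ordering of $I$ for every $A$, so that the conditioning sets are nested correctly. Finiteness of $I$ guarantees that all the entropies are finite, since in the intended application every coordinate is a $\{0,1\}$ indicator.
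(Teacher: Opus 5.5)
The paper gives no proof of this lemma: it quotes Shearer's inequality as a classical input (citing Shearer and the entropy-method literature), so there is nothing internal to compare against. Your chain-rule argument is the standard proof and it is correct. The single global ordering gives $\Ent(X_A)\ge\sum_{i\in A}\Ent(X_i\mid X_{<i})$, and nonnegativity of conditional entropy correctly handles indices covered more than $t$ times.

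One small correction to your closing sentence. Finiteness of $I$ does not by itself make the entropies finite, since a discrete coordinate can have infinite entropy. This is harmless: every step of your argument only adds and compares nonnegative quantities, so it holds verbatim in $[0,\infty]$. In the paper's application the coordinates are indicators anyway.
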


\begin{theorem}[Entropy upper bound on the number of matchings]
\label{thm:shearer}
Let $H$ be a finite nonempty $k$-uniform hypergraph on $n$ vertices with maximum degree
at most $D\ge1$.
Then
\[
  \log Z(H)\ \le\ \frac{n}{k}\Bigl[h\bigl(\qbar(H)\bigr)+\bigl(1-\qbar(H)\bigr)\log D\Bigr].
\]
\end{theorem}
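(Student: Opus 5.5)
We apply Lemma~\ref{lem:shearer} to the indicator vector $X=(X_e)_{e\in\EE}$, where $X_e=\mathbf 1\{e\in M\}$ and $M$ is uniform on $\Mat(H)$. Then $\Ent(X)=\log Z(H)$. For the family $\mathcal A$ we take the $n$ stars $\dg(v)$, $v\in\VV$. Every edge $e$ lies in exactly $k$ of these stars, one for each of its vertices, so Shearer's inequality with $t=k$ gives
\[
  \log Z(H)\ \le\ \frac1k\sum_{v\in\VV}\Ent\bigl(X_{\dg(v)}\bigr).
\]

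Next we bound each star entropy. Since $M$ is a matching, $X_{\dg(v)}$ has at most one nonzero coordinate. It is therefore determined by a random variable taking the value ``$v$ uncovered'' or the name of the unique edge of $\dg(v)$ in $M$. Let $B_v=\mathbf 1\{v\text{ covered}\}$; it is a function of $X_{\dg(v)}$. The chain rule gives
\[
  \Ent\bigl(X_{\dg(v)}\bigr)=\Ent(B_v)+\Ent\bigl(X_{\dg(v)}\mid B_v\bigr).
\]
Here $\Ent(B_v)=h(q_H(v))$. Conditioned on $B_v=0$ the vector is deterministic. Conditioned on $B_v=1$ it takes at most $|\dg(v)|\le D$ values, so its conditional entropy is at most $\log D$. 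Hence
\[
  \Ent\bigl(X_{\dg(v)}\bigr)\le h(q_H(v))+(1-q_H(v))\log D.
\]
Vertices of degree $0$ need no special treatment: they have $q_H(v)=1$ and the bound reads $0\le0$.

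It remains to sum over $v$ and average. The function $x\mapsto h(x)+(1-x)\log D$ is concave on $[0,1]$, being the sum of a concave and a linear function. By Jensen's inequality,
\[
  \frac1n\sum_{v}\Bigl[h(q_H(v))+(1-q_H(v))\log D\Bigr]\le h\bigl(\qbar(H)\bigr)+\bigl(1-\qbar(H)\bigr)\log D.
\]
Multiplying by $n/k$ yields the stated bound.

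No step here is a genuine obstacle. The only points needing care are checking that the covering multiplicity is exactly $k$, which is where $k$-uniformity enters, and applying Jensen's inequality to the combined concave function rather than to $h$ alone.
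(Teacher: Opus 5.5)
Your proposal is correct and follows the paper's proof: Shearer's inequality over the $k$-fold cover by vertex stars, the chain rule splitting each star entropy according to whether its centre is covered, the $\log D$ bound on the covered branch, and then Jensen. The only difference is that the paper applies concavity to $h$ alone and averages the linear term $(1-q_H(v))\log D$ exactly. That is equivalent to your use of Jensen on the combined function, so the step you flag as needing care is routine.
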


\begin{proof}
Let $M$ be uniform on $\Mat(H)$ and let $X=(X_e)_{e\in \EE}$ with
$X_e=\mathbf{1}\{e\in M\}$. The map $M\mapsto X$ is a bijection onto its image, so $X$ is
uniform on a set of size $Z(H)$ and
\[
  \Ent(X)\ =\ \log Z(H).
\]
Take $I=\EE$ and $\mathcal{A}=\{\dg(v):v\in \VV\}$, the family of vertex stars, listed
with multiplicity. Because $H$ is $k$-uniform, each edge $e$ lies in exactly $k$ stars,
namely those of its $k$ vertices. Lemma~\ref{lem:shearer} with $t=k$ gives
\begin{equation}
\label{eq:shearer-applied}
  k\log Z(H)\ \le\ \sum_{v\in \VV}\Ent\bigl(X_{\dg(v)}\bigr).
\end{equation}

Fix $v$. Since $M$ is a matching, at most one edge of $\dg(v)$ belongs to $M$, so
$X_{\dg(v)}$ takes values in the set consisting of the zero vector together with the
standard basis vectors indexed by $\dg(v)$. Let
$Y_v=\mathbf{1}\{X_{\dg(v)}=0\}=\mathbf{1}\{v\text{ uncovered}\}$, so
$\PP(Y_v=1)=q_H(v)$. Then $Y_v$ is a function of $X_{\dg(v)}$, whence by the chain rule
\[
  \Ent\bigl(X_{\dg(v)}\bigr)
  \ =\ \Ent(Y_v)+\Ent\bigl(X_{\dg(v)}\,\big|\,Y_v\bigr)
  \ =\ h\bigl(q_H(v)\bigr)+\bigl(1-q_H(v)\bigr)\,
        \Ent\bigl(X_{\dg(v)}\,\big|\,Y_v=0\bigr),
\]
using that $\Ent(X_{\dg(v)}\mid Y_v=1)=0$ because $X_{\dg(v)}$ is then determined.
Conditionally on $Y_v=0$ the vector $X_{\dg(v)}$ takes at most $|\dg(v)|\le D$ values, so
$\Ent(X_{\dg(v)}\mid Y_v=0)\le\log D$. Therefore
\[
  \Ent\bigl(X_{\dg(v)}\bigr)\ \le\ h\bigl(q_H(v)\bigr)+\bigl(1-q_H(v)\bigr)\log D .
\]

Summing over $v$ and inserting into \eqref{eq:shearer-applied}:
\[
  k\log Z(H)\ \le\ \sum_{v}h\bigl(q_H(v)\bigr)+\Bigl(n-\sum_v q_H(v)\Bigr)\log D
  \ \le\ n\,h\bigl(\qbar(H)\bigr)+n\bigl(1-\qbar(H)\bigr)\log D ,
\]
where the last step uses concavity of $h$ (Jensen) for the first sum and the definition of
$\qbar$ for the second. Divide by $k$.
\end{proof}

Theorem~\ref{thm:shearer} uses $k$-uniformity and the maximum-degree bound but not
regularity or linearity. It is
the only place at which the target quantity $\qbar$ enters the argument, and it is what
turns a lower bound on $Z(H)$ into an upper bound on $\qbar$. The rest of the paper
produces that lower bound.

\section{A quantitative version of the earlier sampling route}
\label{sec:nibble}

We first prove Theorem~\ref{thm:nibble}. The needed special case of Gould and Kelly's
near-perfect-matching theorem \cite[Theorem 1.4]{GouldKelly} is the following. For fixed
$k\ge3$, there are constants $A$ and, for every sufficiently small fixed $\theta>0$, a
degree threshold such that an $(n,a,\varepsilon)$-regular linear $k$-uniform hypergraph
has a matching leaving at most
\begin{equation}
\label{eq:GK-leftover}
  n B^{-1+\theta}(\log a)^A
\end{equation}
vertices uncovered whenever
\[
  1\le B\le\min\{a^{1/(k-1)},1/\varepsilon\}.
\]
Here $(n,a,\varepsilon)$-regular means that every degree lies in
$[(1-\varepsilon)a,(1+\varepsilon)a]$. This follows from their theorem by putting all
codegree bounds equal to $1$; importantly, its degree threshold is independent of $n$.

\begin{proof}[Proof of Theorem~\ref{thm:nibble}]
Fix $\zeta>0$ and choose a real $t$ with
\[
  \rho_k<t<\rho_k+\zeta,
  \qquad \rho_k=\max\{3,k-1\}.
\]
Put $L=\log d$, $a=\lfloor L^t\rfloor$ and $p=a/d$. Sample each edge independently
with probability $p$. At every vertex the sampled degree $D_v$ has distribution
$\Bin(d,p)$ and mean $a$. Set
\[
  \varepsilon=\sqrt{\frac{12\log(kd)}a}.
\]
For large $d$, the two-sided Chernoff bound gives
\[
  \PP\bigl(|D_v-a|>\varepsilon a\bigr)
  \le 2\exp(-\varepsilon^2a/3)=2(kd)^{-4}.
\]
The event at $v$ depends only on the edge trials in $\dg(v)$, so it is independent of
all but at most $(k-1)d$ of the other vertex events. The lower-bound form of the
Lov\'asz local lemma \cite[Chapter 5]{AlonSpencer}, with
$x=((k-1)d+1)^{-1}$, therefore applies: indeed, for all
sufficiently large $d$, $2(kd)^{-4}\le x(1-x)^{(k-1)d}$. It shows that with probability
at least
\begin{equation}
\label{eq:LLL-probability}
  (1-x)^n\ \ge\ \exp(-2nx)
\end{equation}
all sampled degrees lie in $[(1-\varepsilon)a,(1+\varepsilon)a]$. Notice that the
lower bound may be exponentially small in $n$; its logarithm, rather than a constant
success probability, is what the counting transfer uses.

Let
\[
  b=\min\biggl\{\frac{t}{k-1},\frac{t-1}{2}\biggr\}>1
\]
and choose $\sigma>0$ with $1+\sigma<b$. Since
\[
  a^{1/(k-1)}=L^{t/(k-1)+o(1)},
  \qquad
  \varepsilon^{-1}=L^{(t-1)/2+o(1)},
\]
we may take $B=L^{1+\sigma}$ in \eqref{eq:GK-leftover}. Choose $\theta>0$ small
enough that $(1+\sigma)(1-\theta)>1$. On the event in
\eqref{eq:LLL-probability}, Gould and Kelly then give a matching covering all but
$\nu n$ vertices, where
\[
  \nu\le B^{-1+\theta}(\log a)^A=o(L^{-1}).
\]

Let $N$ be the number of matchings of $H$ with at least
$s=(1-\nu)n/k$ edges. The same union bound as in Lemma~\ref{lem:transfer}, together
with \eqref{eq:LLL-probability}, gives
\[
  \log N\ \ge\ \frac{(1-\nu)n}{k}(L-\log a)-2nx.
\]
Combining this lower bound with Theorem~\ref{thm:shearer}, and writing
$\qbar=\qbar(H)$, yields
\[
  \qbar L
  \le h(\qbar)+\nu L+(1-\nu)\log a+2kx
  \le \log2+\nu L+\log a+2kx.
\]
Hence, uniformly over $H$,
\[
  \qbar(H)\le\bigl(t+o_d(1)\bigr)\frac{\log\log d}{\log d}.
\]
Since $\zeta>0$ was arbitrary, the result follows.
\end{proof}

The dependence on $\varepsilon$ is load-bearing here. The Alon--Kim--Spencer theorem
\cite{AlonKimSpencer} is stated for exactly regular samples and cannot simply be applied
after the local-lemma step. Vu's near-regular extension \cite{Vu} corroborates the
$1/(k-1)$ exponent in its permitted error range, while the stronger estimate of Kang,
K\"uhn, Methuku and Osthus \cite{KKMO} assumes a relation between degree and order that
is unavailable in the supremum defining $U_k(d)$. The explicit
$B\le1/\varepsilon$ bottleneck in \cite{GouldKelly} is what makes the argument above
valid for arbitrary order; it is also the source of the lower constraint $t>3$.

\section{Sparse sampling with aggregate trimming}
\label{sec:trim}

Throughout Sections~\ref{sec:trim}--\ref{sec:proof} we fix $k\ge2$ and let $H$ be a
finite nonempty linear $k$-uniform hypergraph on $n$ vertices with maximum degree
$D=\Deltaop(H)$. Set
\begin{equation}
\label{eq:params}
  L:=\log D,\qquad
  a:=\bigl\lfloor L^k(\log L)^{5k}\bigr\rfloor,\qquad
  p:=\frac{a}{D},\qquad
  \delta:=\frac1L,\qquad
  \tau:=(1+\delta)a .
\end{equation}
All asymptotic statements are as $D\to\infty$ with $k$ fixed. We assume throughout that
$D$ is large enough (depending only on $k$) that
\begin{equation}
\label{eq:dlarge}
  a\ge2,\qquad p=\frac{a}{D}<1,\qquad 0<\delta<1,\qquad
  \delta a\ge 2 ,
\end{equation}
all of which hold because $a\to\infty$ and $a=o(D)$.

Let $S\subseteq \EE$ be obtained by retaining each edge of $H$ independently with
probability $p$. Put
\[
  D_v:=|S\cap\dg(v)|,\qquad
  B:=\{v\in \VV: D_v>\tau\},
\]
and let $G$ be the hypergraph on vertex set $\VV$ whose edges are the members of $S$
meeting no vertex of $B$. Let
\[
  R:=|S|-|E(G)|
\]
be the number of deleted sampled edges, and put
\begin{equation}
\label{eq:eps}
  \varepsilon_D\ :=\ \exp\bigl(-\delta^2a/24\bigr).
\end{equation}

We use the multiplicative Chernoff bounds in the following explicit form; they are
classical, see e.g.\ \cite[\S2.1]{JLR} or \cite[Appendix A]{AlonSpencer}.

\begin{lemma}[Chernoff]
\label{lem:chernoff}
Let $X\sim\Bin(N,p)$ with $\mu=\EX X=Np$. Then
\[
\begin{aligned}
  \PP\bigl(X>(1+t)\mu\bigr)&\le \exp\Bigl(-\frac{t^2\mu}{3}\Bigr)
    &&(0<t\le1),\\
  \PP\bigl(X<(1-t)\mu\bigr)&\le \exp\Bigl(-\frac{t^2\mu}{2}\Bigr)
    &&(0<t<1).
\end{aligned}
\]
\end{lemma}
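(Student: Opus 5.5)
We prove the two tail bounds through the exponential moment (Chernoff) method. Write $X=\sum_{i=1}^N\xi_i$ with independent $\xi_i\sim\mathrm{Bernoulli}(p)$. For any $\lambda\in\mathbb{R}$, independence and $1+x\le e^x$ give
\[
  \EX e^{\lambda X}=\bigl(1+p(e^\lambda-1)\bigr)^N\le\exp\bigl(\mu(e^\lambda-1)\bigr).
\]
For the upper tail we take $\lambda=\log(1+t)>0$. Markov's inequality applied to $e^{\lambda X}$ then gives
\[
  \PP\bigl(X>(1+t)\mu\bigr)\le\exp\bigl(\mu t-(1+t)\mu\log(1+t)\bigr)
  =\Bigl(\frac{e^t}{(1+t)^{1+t}}\Bigr)^{\mu}.
\]
For the lower tail we apply Markov to $e^{-\lambda X}$ with $\lambda=-\log(1-t)>0$, which gives
\[
  \PP\bigl(X<(1-t)\mu\bigr)\le\Bigl(\frac{e^{-t}}{(1-t)^{1-t}}\Bigr)^{\mu}.
\]
If $\mu=0$, then $X=0$ almost surely and both probabilities vanish, so we may assume $\mu>0$.

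It remains to reduce each bound to the stated quadratic exponent, and these elementary inequalities are the only real step.

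For the upper tail we need $f(t):=(1+t)\log(1+t)-t-t^2/3\ge0$ on $(0,1]$. We have $f(0)=0$ and $f'(t)=\log(1+t)-2t/3$. Since $f'(0)=0$ and $f''(t)=1/(1+t)-2/3$, the function $f'$ increases on $[0,1/2]$ and decreases on $[1/2,1]$. Moreover $f'(1)=\log2-2/3>0$. Hence $f'\ge0$ on $[0,1]$, and so $f\ge0$ there.

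For the lower tail we need $g(t):=-t-(1-t)\log(1-t)+t^2/2\le0$ on $(0,1)$. We have $g(0)=0$ and $g'(t)=\log(1-t)+t\le0$, because $\log(1-t)\le-t$.

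Combining these with the displayed bounds gives $\exp(-t^2\mu/3)$ and $\exp(-t^2\mu/2)$ respectively. The restriction $t\le1$ in the upper bound is used exactly in the sign check $f'(1)>0$. For $t$ much larger than $1$ the quadratic exponent would fail, since $f'(t)\to-\infty$ as $t\to\infty$. The case $t=1$ of the lower tail is excluded by hypothesis, which avoids $\log0$ in the choice of $\lambda$.
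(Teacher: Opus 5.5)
Your proof is correct: the moment-generating-function bound, the choices $\lambda=\log(1+t)$ and $\lambda=-\log(1-t)$, and the sign checks on $f$ and $g$ (including $f'(1)=\log2-2/3>0$) all go through. The paper gives no proof of this lemma and simply cites it as classical (\cite[\S2.1]{JLR}, \cite[Appendix A]{AlonSpencer}); the argument in those sources is this same exponential-moment method, and differs at most in how the elementary inequality for $(1+t)\log(1+t)-t$ is verified.
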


The second ingredient is the exact size-bias identity that lets us evaluate, rather than
merely bound, the expected mass sitting on high-degree vertices.

\begin{lemma}[Size bias for the binomial]
\label{lem:sizebias}
Let $X\sim\Bin(N,p)$ with $N\ge1$, let $X'\sim\Bin(N-1,p)$, and let $\tau\in\mathbb{R}$.
Then
\[
  \EX\bigl[X\,\mathbf{1}\{X>\tau\}\bigr]\ =\ Np\;\PP\bigl(1+X'>\tau\bigr).
\]
\end{lemma}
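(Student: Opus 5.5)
The plan is to prove the identity by direct computation with the binomial probability mass function, through the elementary identity $j\binom{N}{j}=N\binom{N-1}{j-1}$. First expand the left-hand side as a finite sum over the support:
\[
  \EX\bigl[X\,\mathbf 1\{X>\tau\}\bigr]=\sum_{j=0}^{N} j\,\mathbf 1\{j>\tau\}\binom{N}{j}p^j(1-p)^{N-j}.
\]
The $j=0$ term vanishes regardless of $\tau$, because it carries the factor $j$. For $j\ge1$, rewrite $j\binom Nj$ as $N\binom{N-1}{j-1}$ and pull out one factor of $p$. This gives
\[
  Np\sum_{j=1}^{N}\mathbf 1\{j>\tau\}\binom{N-1}{j-1}p^{j-1}(1-p)^{(N-1)-(j-1)}.
\]

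Next substitute $i=j-1$, which ranges over $0,\dots,N-1$. The summand becomes $\mathbf 1\{1+i>\tau\}\,\PP(X'=i)$ with $X'\sim\Bin(N-1,p)$, so the sum is exactly $\PP(1+X'>\tau)$. Here $N\ge1$ guarantees that $\Bin(N-1,p)$ is a genuine distribution. The degenerate case $N-1=0$ is consistent, since then $X'\equiv0$ and both sides reduce to $p\,\mathbf 1\{1>\tau\}$.

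An alternative, more conceptual route writes $X=\sum_{i=1}^N\xi_i$ as a sum of i.i.d. Bernoulli$(p)$ variables. Then
\[
  \EX[X\mathbf 1\{X>\tau\}]=\sum_i\EX[\xi_i\mathbf 1\{X>\tau\}]=Np\,\PP\bigl(X>\tau\mid\xi_1=1\bigr).
\]
Conditionally on $\xi_1=1$, the variable $X$ is distributed as $1+X'$, and the same identity follows.

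There is no real obstacle here. The only points requiring care are the index shift and the observation that $\tau$ is an arbitrary real number, including negative values. The indicator is simply carried along unchanged through the substitution, so no case analysis on $\tau$ is needed.
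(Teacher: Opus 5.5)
Your proof is correct. Your main argument is a direct computation with the binomial mass function, via the absorption identity $j\binom{N}{j}=N\binom{N-1}{j-1}$ and the shift $i=j-1$. The paper's proof is exactly your ``alternative route'': write $X=\sum_{i=1}^N\xi_i$, use linearity, note that $\xi_i\mathbf 1\{X>\tau\}=\xi_i\mathbf 1\{1+\sum_{j\ne i}\xi_j>\tau\}$, and factor each term by independence as $p\,\PP(1+X'>\tau)$.

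The two are the same identity at different levels of abstraction, since $j\binom Nj$ counts success sets of size $j$ with one distinguished success, which is what the sum over $i$ does. Your computation is self-contained finite algebra and handles $p\in\{0,1\}$ with the convention $0^0=1$. The decomposition needs no binomial coefficients and adapts directly to independent Bernoulli variables with unequal parameters, giving $\sum_i p_i\,\PP(1+X-\xi_i>\tau)$ in a setting where no closed-form mass function is available. It also mirrors the application, in which $D_v$ is literally a sum of edge indicators.

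One small point in your version of that route: factoring $\EX[\xi_1\mathbf 1\{1+\sum_{j\ne1}\xi_j>\tau\}]$ by independence, rather than writing $\PP(X>\tau\mid\xi_1=1)$, avoids conditioning on an event of probability $p$. This matters only in the trivial case $p=0$.
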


\begin{proof}
Write $X=\sum_{i=1}^{N}\xi_i$ with $\xi_i$ i.i.d.\ Bernoulli$(p)$. Then
\[
  \EX\bigl[X\mathbf{1}\{X>\tau\}\bigr]
  =\sum_{i=1}^{N}\EX\bigl[\xi_i\mathbf{1}\{X>\tau\}\bigr]
  =\sum_{i=1}^{N}p\,\PP\Bigl(1+\sum_{j\ne i}\xi_j>\tau\Bigr)
  =Np\,\PP\bigl(1+X'>\tau\bigr),
\]
where the middle step conditions on $\xi_i=1$ and uses independence.
\end{proof}

The shift by $1$ is essential: the tail event below involves
$\PP(1+X'>\tau)$, not $\PP(X'>\tau)$.

\begin{lemma}[Aggregate trimming, uniformly in the order]
\label{lem:trim}
With the notation and assumptions above, let
\[
  \mathcal{G}\ :=\ \bigl\{\,|S|\ge(1-\delta)p|\EE|\,\bigr\}
    \ \cap\ \bigl\{\,R\le pk|\EE|\sqrt{\varepsilon_D}\,\bigr\}.
\]
Then
\[
  \PP\bigl(\mathcal{G}^{\,c}\bigr)\ \le\
  \exp\bigl(-\delta^2a/2\bigr)+\sqrt{\varepsilon_D},
\]
and on the event $\mathcal{G}$,
\[
  |E(G)|\ \ge\ p|\EE|\bigl(1-\delta-k\sqrt{\varepsilon_D}\bigr)
  \qquad\text{and}\qquad
  \Deltaop(G)\ \le\ (1+\delta)a .
\]
Moreover $G$ is a finite $k$-uniform linear hypergraph.
\end{lemma}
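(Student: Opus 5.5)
The plan is to bound the two failure events separately and then read off the deterministic consequences on $\mathcal G$.

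\textbf{Lower tail of $|S|$.} The size $|S|$ is distributed as $\Bin(|\EE|,p)$ with mean $p|\EE|$. The second bound of Lemma~\ref{lem:chernoff} with $t=\delta$ gives
\[
  \PP\bigl(|S|<(1-\delta)p|\EE|\bigr)\le\exp(-\delta^2p|\EE|/2).
\]
To reach the stated bound I need $p|\EE|\ge a$. Since $H$ has a vertex of degree $D$, we have $|\EE|\ge D$, hence $p|\EE|\ge a$. This is a single event, with no union over vertices.

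\textbf{The deleted mass $R$.} Every deleted sampled edge meets some vertex of $B$ and is counted in $D_v$ for such a $v$. Hence
\[
  R\le\sum_v D_v\mathbf 1\{D_v>\tau\}.
\]
For a vertex with $d_v=|\dg(v)|\ge1$, Lemma~\ref{lem:sizebias} gives
\[
  \EX[D_v\mathbf 1\{D_v>\tau\}]=d_vp\,\PP(1+X'_v>\tau),\qquad X'_v\sim\Bin(d_v-1,p).
\]
Stochastic domination gives $\PP(1+X'_v>\tau)\le\PP(X''>\tau-1)$ with $X''\sim\Bin(D-1,p)$. The mean of $X''$ is $\mu=(D-1)p\le a$. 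Using $\delta a\ge2$, we get $\tau-1\ge(1+\delta/2)a\ge(1+\delta/2)\mu$.

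Next I would apply the first Chernoff bound. The factor $\mu$ in the exponent should be replaced by $a$, either by monotonicity (pad $X''$ up to $\Bin(D,p)$ with mean exactly $a$) or by absorbing the difference, at the cost of a constant. Either way the bound is $\exp(-(\delta/2)^2a/3\cdot c)$, and after this slack it is at most $\exp(-\delta^2a/24)=\varepsilon_D$. Pinning down these constants, and the shift by $1$, is the only fiddly point, and $24$ leaves room for it. Summing over $v$ and using \eqref{eq:edgecount} gives $\EX R\le pk|\EE|\varepsilon_D$. Markov's inequality then gives
\[
  \PP\bigl(R>pk|\EE|\sqrt{\varepsilon_D}\bigr)\le\sqrt{\varepsilon_D}.
\]
A union bound over the two events yields the stated bound on $\PP(\mathcal G^c)$.

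\textbf{Consequences on $\mathcal G$.} On $\mathcal G$ we have
\[
  |E(G)|=|S|-R\ge p|\EE|(1-\delta-k\sqrt{\varepsilon_D}).
\]
For the degree bound, any vertex $v$ with an edge of $G$ lies on a surviving sampled edge, so $v\notin B$. Therefore $\deg_G(v)\le D_v\le\tau$. A vertex with no edge of $G$ has degree $0$.

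Finally, $G$ is a subhypergraph of $H$ on the same finite vertex set, so it inherits $k$-uniformity and linearity.
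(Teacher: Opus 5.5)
Your proposal is correct and follows the paper's proof essentially step for step. The common steps are Chernoff for the lower tail of $|S|$ via $|\EE|\ge D$, the deterministic bound $R\le\sum_v D_v\mathbf 1\{D_v>\tau\}$, size bias plus domination by $\Bin(D-1,p)$, the shift handled through $\delta a\ge2$, Markov's inequality, and a union bound.

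The constant you left hedged is settled in the paper by your ``absorbing'' option: it uses $\mu'=(D-1)p\ge a/2$ to turn $\exp(-(\delta/2)^2\mu'/3)$ into $\exp(-\delta^2a/24)$. Your padding option to $\Bin(D,p)$ would also work, and directly gives the stronger $\exp(-\delta^2a/12)$.
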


\begin{proof}
$G$ is a subhypergraph of $H$ on the same vertex set, so it is finite, $k$-uniform and
linear. If $v\in B$, then every sampled edge at $v$ meets
$B$ (namely at $v$) and so was deleted; hence $v$ has degree $0$ in $G$. If $v\notin B$
then the degree of $v$ in $G$ is at most $D_v\le\tau=(1+\delta)a$. This holds
deterministically, not merely on $\mathcal{G}$.

$|S|\sim\Bin(|\EE|,p)$ with mean $p|\EE|$. A vertex of degree $D$ is incident with $D$
distinct edges, so $|\EE|\ge D$ and hence $p|\EE|\ge a$. Lemma~\ref{lem:chernoff}
therefore gives
\[
  \PP\bigl(|S|<(1-\delta)p|\EE|\bigr)
  \ \le\ \exp\bigl(-\delta^2p|\EE|/2\bigr)
  \ \le\ \exp\bigl(-\delta^2a/2\bigr).
\]

For the degree upper tail, let $Y\sim\Bin(D-1,p)$ and
$\mu':=\EX Y=(D-1)p=a(1-1/D)$. For large $D$ we have $a/2\le\mu'\le a$. By
\eqref{eq:dlarge}, $\delta a\ge2$, so
\[
  \tau-1=(1+\delta)a-1=a+\delta a-1\ \ge\ a+\frac{\delta a}{2}
  =\Bigl(1+\frac{\delta}{2}\Bigr)a\ \ge\ \Bigl(1+\frac{\delta}{2}\Bigr)\mu' .
\]
Applying Lemma~\ref{lem:chernoff} to $Y$ with $t=\delta/2\in(0,1]$ and then using
$\mu'\ge a/2$,
\begin{equation}
\label{eq:tail}
  \PP\bigl(1+Y>\tau\bigr)=\PP\bigl(Y>\tau-1\bigr)
  \ \le\ \exp\Bigl(-\frac{(\delta/2)^2\mu'}{3}\Bigr)
  \ \le\ \exp\Bigl(-\frac{\delta^2 a}{24}\Bigr)=\varepsilon_D .
\end{equation}

Every deleted sampled edge contains at least one vertex of $B$ and is counted at least
once in $\sum_{v\in B}D_v$. Hence, deterministically,
\[
  R\ \le\ \sum_{v\in B}D_v\ =\ \sum_{v\in \VV}D_v\,\mathbf{1}\{D_v>\tau\}.
\]
Put $d_v=|\dg(v)|\le D$. When $d_v\ge1$, Lemma~\ref{lem:sizebias}, followed by the
stochastic domination $\Bin(d_v-1,p)\preceq\Bin(D-1,p)$ and \eqref{eq:tail}, gives
\[
  \EX\bigl[D_v\mathbf{1}\{D_v>\tau\}\bigr]
  =d_vp\,\PP\bigl(1+\Bin(d_v-1,p)>\tau\bigr)
  \le d_vp\varepsilon_D .
\]
The same inequality is trivial when $d_v=0$. Summing it and using
$\sum_vd_v=k|\EE|$ gives $\EX R\le pk|\EE|\varepsilon_D$. Thus Markov's inequality
gives
\[
  \PP\bigl(R>pk|\EE|\sqrt{\varepsilon_D}\bigr)\ \le\ \sqrt{\varepsilon_D}.
\]

A union bound gives the stated estimate for $\PP(\mathcal{G}^c)$. On $\mathcal{G}$,
\[
  |E(G)|=|S|-R\ \ge\ p|\EE|(1-\delta)-pk|\EE|\sqrt{\varepsilon_D}
  \ =\ p|\EE|\bigl(1-\delta-k\sqrt{\varepsilon_D}\bigr).
\]
\end{proof}

The two exceptional probabilities depend only on $D$ and $k$, which is the uniformity in
the order that will be needed below.

\section{A large matching inside the sample}
\label{sec:colour}

Our source of a large matching in the trimmed sample is the following theorem of Molloy and
Reed, a quantitative sharpening of Kahn's $\chil\le\Delta+o(\Delta)$ for linear hypergraphs
\cite{Kahn96}.

\begin{theorem}[Molloy--Reed {\cite[Theorem 1]{MolloyReed}}]
\label{thm:MR}
For every $k$ there is a constant $c_k<\infty$, depending on $k$ only, such that every
$k$-uniform linear hypergraph $G$ with maximum degree $\Delta=\Deltaop(G)\ge1$ satisfies
\[
  \chil(G)\ \le\ \Delta+c_k\,\Delta^{1-1/k}(\log\Delta)^4 .
\]
\end{theorem}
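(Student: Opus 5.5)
Theorem~\ref{thm:MR} is quoted from \cite{MolloyReed} and is used here as a black box. If I had to reprove it, I would follow the semi-random ("nibble") list-colouring method of Kahn \cite{Kahn96}, keeping the error terms explicit as Molloy and Reed do.

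Pass to the conflict structure on edges: two edges conflict when they meet. Each edge $e$ meets at most $k(\Delta-1)$ others, and these are grouped into $k$ cliques, one per vertex of $e$. Linearity gives the crucial property that distinct edges through a vertex $v$ share no other vertex. As a result, two edges meeting $e$ at different vertices can conflict with each other only through a single vertex.

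The procedure runs in rounds. At the start of each round:
\begin{itemize}
\item every uncoloured edge $e$ has a current list $L(e)$ of size roughly $\ell_i$;
\item for each colour $c\in L(e)$ and each vertex $v\in e$, the number of uncoloured edges at $v$ still having $c$ in their lists is roughly $t_i$.
\end{itemize}
Initially $\ell_0=\Delta+c_k\Delta^{1-1/k}(\log\Delta)^4$ and $t_0\le\Delta$.

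In a round, each uncoloured edge activates independently with a small probability $\eta$ and proposes a uniformly random colour from its list. It keeps the colour if no conflicting activated edge proposed the same colour. An equalising coin flip is then applied so that every colour survives in a list with exactly the same probability. Removing the kept colours from neighbouring lists gives the expected updates: $\ell_{i+1}\approx\ell_i\,e^{-\eta k t_i/\ell_i}$, and $t_{i+1}$ decays at a rate that keeps the ratio $t_i/\ell_i$ from growing. These recursions are the standard Kahn equations, and the list-to-degree slack stays positive throughout.

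Concentration would come from Talagrand's inequality, or the Molloy--Reed certifiable version. The relevant quantities are list sizes and colour-degrees. Each is determined by the trials within a bounded-radius neighbourhood, and linearity keeps the effect of any single trial bounded. The failure probability at each edge or vertex is $\exp(-\Omega(\ell_i^{c}))$, and the local lemma makes all these good events hold simultaneously. Because this is a local-lemma argument, it is independent of the number of vertices. The rounds continue until the degrees fall to about $\Delta^{1-1/k}$ times a polylogarithmic factor, while each list still exceeds the remaining colour-degree. A final greedy step, or a simple local-lemma step, then completes the list colouring.

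The main obstacle is the error bookkeeping. Each round adds a deviation of order $\sqrt{\ell_i}\,\mathrm{polylog}$ to the tracked quantities, and these deviations must accumulate only to $O(\Delta^{1-1/k}(\log\Delta)^4)$ colours in total. I would first try the choice $\eta=1/\log\Delta$ and follow Molloy and Reed's refined analysis: track both a lower bound on list sizes and an upper bound on colour-degrees, and use the $k$-clique structure to obtain the $\Delta^{1-1/k}$ exponent in place of Kahn's $o(\Delta)$.
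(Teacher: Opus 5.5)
Citing Theorem~\ref{thm:MR} from Molloy--Reed and using it as a black box is exactly what the paper does. The paper gives no proof of it. It only notes that ``linear'' is meant in the Molloy--Reed sense, that $c_k$ may be enlarged to be nonnegative, and that there is no regularity or order hypothesis. Your remark that a local-lemma argument does not depend on the number of vertices correctly reflects that last point.

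Your reproof sketch, however, fails in the finishing phase as written, and it misplaces the source of the exponent. Write $\epsilon\Delta=c_k\Delta^{1-1/k}(\log\Delta)^4$ for the slack, $d$ for the uncoloured degree, and $r=t/\ell$.

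\emph{The ratio only drifts.} In the idealized dynamics behind your recursion $\ell_{i+1}\approx\ell_i e^{-\eta k t_i/\ell_i}$, a colour still available at $v$ has $t_{i+1}\approx t_i(1-\eta)e^{-\eta(k-1)t_i/\ell_i}$. Hence $r_{i+1}\approx r_i(1-\eta)/(1-\eta r_i)$, so $r$ decreases only at rate $\eta r(1-r)$, which is initially about $\eta\epsilon$. Solving gives $r\approx d/(d+\epsilon\Delta)$ and $\ell\approx\Delta\bigl(\epsilon+d/\Delta\bigr)^k$. The $k$-th power arises because a surviving colour must have gone unused at all $k$ vertices of the edge.

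\emph{The finish fails at your stopping point.} At $d\asymp\epsilon\Delta$ you still have $r\approx 1/2$. The lists have shrunk to size $\Theta_k(\epsilon^k\Delta)$, which is only polylogarithmic, while each edge still meets about $k\epsilon\Delta$ uncoloured edges. So a greedy finish is unavailable. The condition ``each list still exceeds the remaining colour-degree'' is also too weak for a local-lemma finish, which needs something like $k\,t\le\ell/8$. Reaching that requires running until $d\approx\epsilon\Delta/(8k)$.

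\emph{Source of the exponent.} The same computation explains $\Delta^{1-1/k}$. The final lists have size about $\epsilon^k\Delta$, and Talagrand-plus-local-lemma concentration with failure probabilities $\Delta^{-\Omega(1)}$ needs them to be polylogarithmically large. This forces $\epsilon\ge\Delta^{-1/k}\,\mathrm{polylog}(\Delta)$. Your framing, with absolute deviations of order $\sqrt{\ell_i}\,\mathrm{polylog}$ accumulating against the slack, does not produce this. Read literally, it would suggest a $\sqrt{\Delta}$-scale slack for every $k$. The binding constraint is the relative error in the late rounds, when the lists are tiny. ``Use the $k$-clique structure'' does not supply the missing step.

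None of this affects the paper, which needs only the quoted statement.
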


Here, as in \cite[\S1]{MolloyReed}, \emph{linear} means that no two edges intersect in more
than one vertex; enlarging the constant if necessary, we take $c_k\ge0$. The theorem has
no regularity or order hypothesis, so it applies to our irregular sample with its isolated
vertices. The constant $c_k$ is not explicit in \cite{MolloyReed}; we therefore keep it
symbolic and obtain no explicit degree threshold. Since $\chi'(G)\le\chil(G)$ and $\chi'(G)$ is an integer, $G$ has a proper edge colouring
with at most $\lfloor\Delta+c_k\Delta^{1-1/k}(\log\Delta)^4\rfloor$ colours and there is no
rounding loss; each colour class is a matching of $G$. Kahn's weaker form would suffice for
the qualitative statement $U_k(d)\to0$, but the quantitative rate below uses the displayed
error term.

We use Theorem~\ref{thm:MR} only through the following monotone form, which lets us
substitute an upper bound for $\Delta(G)$.

\begin{lemma}
\label{lem:MRmono}
For fixed $k$ and $c_k$, the function $\psi(x)=x+c_k x^{1-1/k}(\log x)^4$ is strictly
increasing on $[1,\infty)$. Consequently, if $G$ is $k$-uniform and linear with
$1\le\Deltaop(G)\le T$, then $\chil(G)\le\psi(T)$.
\end{lemma}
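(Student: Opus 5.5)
To prove that $\psi$ is strictly increasing on $[1,\infty)$, I would simply differentiate. For $x>1$,
\[
  \psi'(x)\ =\ 1+c_k\Bigl(1-\frac1k\Bigr)x^{-1/k}(\log x)^4+4c_k\,x^{-1/k}(\log x)^3 .
\]
Since $c_k\ge0$ by the normalisation after Theorem~\ref{thm:MR}, $k\ge2$ (so $1-1/k>0$), and $\log x>0$ for $x>1$, every term after the leading $1$ is nonnegative. Hence $\psi'(x)\ge1>0$ on $(1,\infty)$.

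The function $\psi$ is continuous on $[1,\infty)$, with $\psi(1)=1$ because the logarithmic factor vanishes there. The mean value theorem, applied on $[x,y]$ for $1\le x<y$, then gives $\psi(y)-\psi(x)\ge y-x>0$. This is strict monotonicity on the closed half-line, including the endpoint $1$.

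For the consequence, let $G$ be $k$-uniform and linear with $\Delta:=\Deltaop(G)$ satisfying $1\le\Delta\le T$. Theorem~\ref{thm:MR} applies because $\Delta\ge1$, and gives $\chil(G)\le\Delta+c_k\Delta^{1-1/k}(\log\Delta)^4=\psi(\Delta)$. Monotonicity, with both $\Delta$ and $T$ in $[1,\infty)$, gives $\psi(\Delta)\le\psi(T)$. Note that $T$ need not be an integer; in the application it will be $\tau=(1+\delta)a$.

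There is no real obstacle here. The only points needing care are that $c_k\ge0$ is used explicitly, which is guaranteed by the paper's convention of enlarging $c_k$ if necessary, and that the derivative computation is done for $x>1$, with continuity at $x=1$ extending the conclusion to the endpoint where $(\log x)^3$ vanishes.
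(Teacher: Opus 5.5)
Your proposal is correct and follows essentially the same route as the paper: differentiate $\psi$ on $(1,\infty)$, use $c_k\ge0$ and $\log x>0$ to get $\psi'>0$, extend to the endpoint $x=1$ by continuity (your mean value theorem step makes this explicit), and then combine Theorem~\ref{thm:MR} at $\Delta$ with $\psi(\Delta)\le\psi(T)$. One small remark: you do not need $k\ge2$, since the term with $1-1/k$ is nonnegative for $k=1$ as well.
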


\begin{proof}
For $x>1$,
$\psi'(x)=1+c_k\bigl[(1-\tfrac1k)x^{-1/k}(\log x)^4+4x^{-1/k}(\log x)^3\bigr]>0$,
since $c_k\ge0$ and $\log x>0$; continuity handles the endpoint $x=1$. The consequence
is immediate from Theorem~\ref{thm:MR}.
\end{proof}

Applying this with $T=(1+\delta)a$ costs a relative $\kappa_D$, where
\begin{equation}
\label{eq:kappa}
  \kappa_D\ :=\ c_k\,(1+\delta)^{1-1/k}\,a^{-1/k}\,\bigl(\log((1+\delta)a)\bigr)^4 ,
\end{equation}
and the loss from pigeonholing a colour class of the trimmed sample is measured by
\begin{equation}
\label{eq:eta}
  \eta_D\ :=\ 1-\frac{1-\delta-k\sqrt{\varepsilon_D}}{1+\delta+\kappa_D}
  \ =\ \frac{2\delta+\kappa_D+k\sqrt{\varepsilon_D}}{1+\delta+\kappa_D}\,,
  \qquad
  s_D\ :=\ \bigl(1-\eta_D\bigr)\frac{|\EE|}{D} .
\end{equation}
The losses $\kappa_D$ and $\eta_D$ depend only on $D,k,c_k$; the target size $s_D$
also contains the natural scale $|\EE|/D$.

\begin{lemma}
\label{lem:kappa-small}
$\kappa_D\to0$, $\delta\to0$ and $\varepsilon_D\to0$ as $D\to\infty$; consequently
$\eta_D\to0$ and, for all sufficiently large $D$, $0<\eta_D<1$.
\end{lemma}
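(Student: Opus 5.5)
The plan is to check each of the three limits directly from the definitions in \eqref{eq:params}, \eqref{eq:eps} and \eqref{eq:kappa}, and then read off the statement about $\eta_D$ from its explicit formula \eqref{eq:eta}.

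First, $\delta=1/L=1/\log D\to0$ is immediate. Next, for $\varepsilon_D=\exp(-\delta^2a/24)$ we need $\delta^2a\to\infty$. Since $a=\lfloor L^k(\log L)^{5k}\rfloor\ge \tfrac12 L^k(\log L)^{5k}$ for large $D$, we get
\[
  \delta^2a\ \ge\ \tfrac12 L^{k-2}(\log L)^{5k}.
\]
This tends to infinity for every $k\ge2$: when $k=2$ the factor $(\log L)^{10}$ alone carries it, and this is the one place where the $(\log L)^{5k}$ factor is genuinely needed. Hence $\varepsilon_D\to0$.

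For $\kappa_D$, note that $(1+\delta)^{1-1/k}\le2$. Also $\log((1+\delta)a)\le\log(2a)=O(k\log L)=O(\log L)$, because $\log a\le k\log L+5k\log\log L$. Therefore
\[
  \kappa_D\ =\ O_{k,c_k}\bigl(a^{-1/k}(\log L)^4\bigr).
\]
Since $a^{1/k}\ge c\,L(\log L)^5$ for large $D$, this gives $\kappa_D=O(1/(L\log L))\to0$. I expect this power comparison to be the main check, because the exponent $5k$ has to beat the $(\log a)^4$ loss; the comparison even leaves a spare factor of $\log L$.

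Finally, the numerator of $\eta_D$ in \eqref{eq:eta} is $2\delta+\kappa_D+k\sqrt{\varepsilon_D}$, which tends to $0$. The denominator $1+\delta+\kappa_D$ is at least $1$, using $c_k\ge0$. Hence $\eta_D\to0$. The numerator is strictly positive because $\delta>0$, so $\eta_D>0$ for every $D$, and $\eta_D<1$ for all sufficiently large $D$ because $\eta_D\to0$.

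All the bounds above depend only on $D$, $k$ and $c_k$, as the statement requires.
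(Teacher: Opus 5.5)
Your proof is correct and follows the paper's argument essentially step for step: $\delta=1/\log D$, then $\delta^2a\to\infty$ to get $\varepsilon_D\to0$, then the bounds $(1+\delta)^{1-1/k}\le2$ and $\log((1+\delta)a)\le\log(2a)$ to get $\kappa_D\to0$, and finally \eqref{eq:eta}, with denominator at least $1$ and a numerator that is positive and tends to zero, to get the claims about $\eta_D$. The only cosmetic difference is that you read off $\eta_D<1$ directly from $\eta_D\to0$, whereas the paper observes that $\eta_D<1$ is equivalent to $\delta+k\sqrt{\varepsilon_D}<1$.
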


\begin{proof}
$\delta=1/\log D\to0$ and
$\varepsilon_D=\exp(-a/(24(\log D)^2))\to0$ because
$a/(\log D)^2\to\infty$. Also $(1+\delta)^{1-1/k}\le2$ and
$\log((1+\delta)a)\le\log(2a)$, so
$\kappa_D\le 2c_k a^{-1/k}(\log 2a)^4\to0$. Finally $\eta_D\to0$ from
\eqref{eq:eta}. The inequality $\eta_D<1$ is equivalent to
$\delta+k\sqrt{\varepsilon_D}<1$, which holds for large $D$, and $\eta_D>0$ since
$\delta>0$.
\end{proof}

\begin{lemma}[A large matching in $S$]
\label{lem:largematching}
For all sufficiently large $D$ (depending only on $k$ and $c_k$),
\[
  \PP\bigl(S\text{ contains a matching of }H\text{ of size at least } s_D\bigr)
  \ \ge\ \tfrac12 ,
\]
uniformly over all finite nonempty linear $k$-uniform $H$ of maximum degree $D$.
\end{lemma}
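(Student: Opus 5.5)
We will work on the event $\mathcal{G}$ of Lemma~\ref{lem:trim} and show that on $\mathcal{G}$ the trimmed sample $G$ already contains a matching of size at least $s_D$. Since $E(G)\subseteq S$, this gives the claim once $\PP(\mathcal{G})\ge\tfrac12$. The probability part is the easy step. Lemma~\ref{lem:trim} gives
\[
  \PP(\mathcal{G}^c)\le\exp(-\delta^2a/2)+\sqrt{\varepsilon_D}.
\]
Both terms depend only on $D$ and $k$, and both tend to $0$ by Lemma~\ref{lem:kappa-small}, because $\delta^2a\to\infty$. Hence $\PP(\mathcal{G}^c)\le\tfrac12$ for all $D$ beyond a threshold depending only on $k$, uniformly in $H$ and in the order.

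Now fix an outcome in $\mathcal{G}$. If $\Deltaop(G)=0$ then $G$ has no edges, which would contradict
\[
  |E(G)|\ge p|\EE|(1-\delta-k\sqrt{\varepsilon_D})>0
\]
for large $D$. So $1\le\Deltaop(G)\le T:=(1+\delta)a$. Lemma~\ref{lem:trim} shows that $G$ is linear and $k$-uniform, so Lemma~\ref{lem:MRmono} applies and gives $\chi'(G)\le\chil(G)\le\psi(T)$. We then rewrite
\[
  \psi(T)=T+c_kT^{1-1/k}(\log T)^4=a\bigl(1+\delta+\kappa_D\bigr),
\]
which is exactly what the definition \eqref{eq:kappa} of $\kappa_D$ is designed to give. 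Take a proper edge colouring of $G$ with at most $\psi(T)$ colours. Each colour class is a matching of $H$ lying inside $S$. By pigeonhole, some class has size at least
\[
  \frac{|E(G)|}{\psi(T)}\ \ge\ \frac{p|\EE|\,(1-\delta-k\sqrt{\varepsilon_D})}{a(1+\delta+\kappa_D)}
  \ =\ \frac{|\EE|}{D}\,(1-\eta_D)\ =\ s_D,
\]
using $p/a=1/D$ and the definition \eqref{eq:eta}.

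The only delicate points are bookkeeping. First, the identity $\psi(T)=a(1+\delta+\kappa_D)$ has to match \eqref{eq:kappa} exactly, and it does: we have $T^{1-1/k}=a\,(1+\delta)^{1-1/k}a^{-1/k}$. Second, all thresholds on $D$ must depend only on $k$ and $c_k$. These thresholds come from the assumptions \eqref{eq:dlarge}, from $\eta_D\in(0,1)$ (Lemma~\ref{lem:kappa-small}), from the positivity of $|E(G)|$, and from $\PP(\mathcal{G}^c)\le\tfrac12$. None of them involves $n$, which gives the claimed uniformity over $H$. I expect no real obstacle beyond checking this chain. Note that the integrality of the number of colours is not needed, since we only use the upper bound $\psi(T)$ in the pigeonhole step.
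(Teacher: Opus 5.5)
Your proposal is correct and follows the paper's proof essentially step for step. You bound $\PP(\mathcal{G}^c)$ via Lemma~\ref{lem:trim} and Lemma~\ref{lem:kappa-small}, and use positivity of the edge lower bound to get $\Deltaop(G)\ge1$. You then apply Lemma~\ref{lem:MRmono} with $T=(1+\delta)a$, rewrite $\psi(T)=a(1+\delta+\kappa_D)$, and pigeonhole a colour class of size at least $s_D$ inside $S$.
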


\begin{proof}
By Lemma~\ref{lem:trim} and Lemma~\ref{lem:kappa-small},
$\PP(\mathcal{G})\ge1-\exp(-\delta^2a/2)-\sqrt{\varepsilon_D}\ge\tfrac12$ for all
$D$ large enough. It therefore suffices to show that on $\mathcal{G}$ the sample $S$
contains a matching of size at least $s_D$.

Work on $\mathcal{G}$. The lower bound in Lemma~\ref{lem:trim} is positive for large
$D$, so $G$ has an edge and $\Deltaop(G)\ge1$.

$G$ is finite, $k$-uniform and linear, and $\Deltaop(G)\le(1+\delta)a$ by
Lemma~\ref{lem:trim}. By Lemma~\ref{lem:MRmono} applied with $T=(1+\delta)a$,
\[
  \chi'(G)\ \le\ \chil(G)\ \le\
  (1+\delta)a+c_k\bigl((1+\delta)a\bigr)^{1-1/k}\bigl(\log((1+\delta)a)\bigr)^4
  \ =\ a\bigl(1+\delta+\kappa_D\bigr),
\]
the last equality being \eqref{eq:kappa} after dividing by $a$. Fix a proper edge
colouring of $G$ with at most $a(1+\delta+\kappa_D)$ colours; each colour class is a
matching of $G$. By pigeonhole some class has at least
\[
  \frac{|E(G)|}{a(1+\delta+\kappa_D)}
  \ \ge\ \frac{p|\EE|\bigl(1-\delta-k\sqrt{\varepsilon_D}\bigr)}
              {a\bigl(1+\delta+\kappa_D\bigr)}
  \ =\ \frac{|\EE|}{D}\cdot
        \frac{1-\delta-k\sqrt{\varepsilon_D}}{1+\delta+\kappa_D}
  \ =\ (1-\eta_D)\frac{|\EE|}{D}\ =\ s_D
\]
edges. Since $E(G)\subseteq S$, this class is a matching of $H$ contained in $S$.
\end{proof}

The argument uses only the total edge count and maximum degree of $G$.

\section{From existence to counting, and the proofs}
\label{sec:proof}

The following elementary transfer converts Lemma~\ref{lem:largematching} into a count.

\begin{lemma}[Sampling-to-counting transfer]
\label{lem:transfer}
Let $\Omega$ be a finite set, let $p\in(0,1)$, and let $S\subseteq\Omega$ retain each
element of $\Omega$ independently with probability $p$. Let $\mathcal{F}$ be a family of
subsets of $\Omega$, let $s\in\mathbb{R}$, and suppose that
\[
  \PP\bigl(\exists\,F\in\mathcal{F}\ \text{with}\ |F|\ge s\ \text{and}\ F\subseteq S\bigr)
  \ \ge\ c .
\]
Then
\[
  \bigl|\{F\in\mathcal{F}: |F|\ge s\}\bigr|\ \ge\ c\,p^{-s}.
\]
\end{lemma}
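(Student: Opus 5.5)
The plan is a first-moment union bound over the family. Write $\mathcal{F}_s=\{F\in\mathcal{F}:|F|\ge s\}$. The event in the hypothesis is
\[
  \bigcup_{F\in\mathcal{F}_s}\{F\subseteq S\},
\]
so its probability is at most $\sum_{F\in\mathcal{F}_s}\PP(F\subseteq S)$. By independence, $\PP(F\subseteq S)=p^{|F|}$ for each fixed $F\subseteq\Omega$. Since $0<p<1$ and $|F|\ge s$, we have $p^{|F|}\le p^{s}$; this holds for real $s$ as well, since $x\mapsto p^x$ is decreasing. Hence
\[
  c\ \le\ \sum_{F\in\mathcal{F}_s}p^{|F|}\ \le\ |\mathcal{F}_s|\,p^{s}.
\]
Multiplying through by $p^{-s}>0$ gives $|\mathcal{F}_s|\ge c\,p^{-s}$, which is the claim.

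Two small points need to be checked. First, $\mathcal{F}_s$ is finite because $\Omega$ is finite, so the sum is a finite sum. Second, if $\mathcal{F}_s$ is empty, the event has probability $0$, so $c\le0$ and the conclusion is trivial.

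There is no real obstacle. The only step needing care is the monotonicity $p^{|F|}\le p^s$, which uses $p<1$ and holds when $s$ is not an integer. In the intended application, $\Omega=\EE$ and $\mathcal{F}=\Mat(H)$ with $c=\tfrac12$ and $s=s_D$, combined with Lemma~\ref{lem:largematching}. This yields $\log Z(H)\ge s_D\log(1/p)-\log2=s_D(\log D-\log a)-\log 2$.
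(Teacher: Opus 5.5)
Your proof is correct and is essentially the paper's argument: a union bound over $\mathcal{F}_s$, the identity $\PP(F\subseteq S)=p^{|F|}$, and the monotonicity of $x\mapsto p^{x}$ for $0<p<1$ give $c\le|\mathcal{F}_s|\,p^{s}$, which rearranges to the claim. Your remarks on finiteness and on the empty case are harmless extras, since the same chain of inequalities already covers them (an empty sum is $0$).
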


\begin{proof}
Let $\mathcal{F}_s=\{F\in\mathcal{F}:|F|\ge s\}$ and $N=|\mathcal{F}_s|$. By the union
bound,
\[
  c\ \le\ \PP\Bigl(\bigcup_{F\in\mathcal{F}_s}\{F\subseteq S\}\Bigr)
  \ \le\ \sum_{F\in\mathcal{F}_s}\PP(F\subseteq S)
  \ =\ \sum_{F\in\mathcal{F}_s}p^{|F|}
  \ \le\ N p^{s},
\]
where the last inequality uses $|F|\ge s$ together with $0<p<1$, which makes
$x\mapsto p^{x}$ decreasing. Rearranging gives $N\ge cp^{-s}$.
\end{proof}

\begin{proof}[Proof of Theorem~\ref{thm:engine}]
Fix $k\ge2$, let $D$ be large enough for \eqref{eq:dlarge} and
Lemmas~\ref{lem:kappa-small} and~\ref{lem:largematching}, and let $H$ be a finite
nonempty linear $k$-uniform hypergraph of maximum degree $D$.

Apply Lemma~\ref{lem:transfer} with $\Omega=\EE$, $p=a/D$, $\mathcal{F}=\Mat(H)$ and
$s=s_D$. Its hypothesis holds with $c=\tfrac12$ by Lemma~\ref{lem:largematching}, so
\begin{equation}
\label{eq:Zlower}
  Z(H)\ \ge\ \frac12\Bigl(\frac{D}{a}\Bigr)^{s_D}
  \ =\ \frac12\Bigl(\frac{D}{a}\Bigr)^{(1-\eta_D)|\EE|/D}.
\end{equation}
The same estimate holds for the number of matchings of size at least $s_D$;
passing to $Z(H)$ in \eqref{eq:Zlower} discards this size information.
Taking logarithms,
\[
  \log Z(H)\ \ge\ \frac{|\EE|}{D}(1-\eta_D)\bigl(\log D-\log a\bigr)-\log 2 .
\]
Define
\begin{equation}
\label{eq:Cd}
  C_D\ :=\ \eta_D\log D+(1-\eta_D)\log a+\log 2 .
\end{equation}
Since a vertex of maximum degree $D$ is incident with $D$ distinct edges,
$|\EE|/D\ge1$. Consequently
\[
  \frac{|\EE|}{D}\bigl(\log D-C_D\bigr)
  \ =\ \frac{|\EE|}{D}(1-\eta_D)(\log D-\log a)
       -\frac{|\EE|}{D}\log2
  \ \le\ \frac{|\EE|}{D}(1-\eta_D)(\log D-\log a)-\log 2.
\]
Combining the last two displays gives
\begin{equation}
\label{eq:engine}
  \log Z(H)\ \ge\ \frac{|\EE|}{D}\bigl(\log D-C_D\bigr).
\end{equation}
The quantity $C_D$ depends only on $D,k,c_k$, not on the order or on $H$.

To estimate $C_D$, write $L=\log D$ and recall \eqref{eq:params}--\eqref{eq:eta}.
The floor in the definition of $a$ changes its logarithm by $o(1)$, and hence
\begin{align}
  \log a
    &=k\log L+5k\log\log L+o(1)
      =\bigl(k+o(1)\bigr)\log L, \label{eq:loga}\\
  \delta L&=1, \label{eq:deltaL}\\
  \kappa_D L
    &\le 2c_k\,a^{-1/k}\bigl(\log(2a)\bigr)^4L
      =O_k\bigl((\log L)^{-1}\bigr)=o(1), \label{eq:kappaL}\\
  \sqrt{\varepsilon_D}\,L
    &=\exp\bigl(-a/(48L^2)\bigr)L=o(1). \label{eq:epsL}
\end{align}
For \eqref{eq:kappaL}, we used
$a^{-1/k}=O(L^{-1}(\log L)^{-5})$ and $\log(2a)=O_k(\log L)$. For
\eqref{eq:epsL}, $a/L^2\to\infty$ even when $k=2$. From \eqref{eq:eta} and
$1+\delta+\kappa_D\ge1$ we now get
\begin{equation}
\label{eq:etaL}
  \eta_D L\ \le\ 2\delta L+\kappa_D L+k\sqrt{\varepsilon_D}L\ \le\ 2+o(1).
\end{equation}
Therefore, using \eqref{eq:Cd}, \eqref{eq:loga}, \eqref{eq:etaL} and $\eta_D\to0$,
\begin{equation}
\label{eq:Casymp}
  C_D\ =\ \eta_DL+(1-\eta_D)\log a+\log2
  \ =\ \bigl(k+o_D(1)\bigr)\log\log D .
\end{equation}
For the particular parameters in \eqref{eq:params}, the same estimates and
\eqref{eq:eta} give the more explicit expansion
\[
  C_D=k\log\log D+5k\log\log\log D+2+\log2+o_D(1).
\]
The secondary term reflects this parameter choice and is not claimed to be optimal.
In particular $C_D=O_k(\log\log D)=o(\log D)$, and \eqref{eq:engine} is the assertion of
Theorem~\ref{thm:engine}.
\end{proof}

\begin{corollary}[Many large matchings]
\label{cor:manylarge}
With $a$, $\eta_D$ and $s_D=(1-\eta_D)|\EE|/D$ as in
\eqref{eq:params} and \eqref{eq:eta}, every $H$ covered by
Theorem~\ref{thm:engine} has at least
\[
  \frac12\Bigl(\frac{D}{a}\Bigr)^{s_D}
\]
matchings of size at least $s_D$, for all sufficiently large $D$.
In particular, a $d$-regular $H$ has exponentially many matchings of size
$(1-o_d(1))n/k$.
\end{corollary}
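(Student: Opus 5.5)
The first claim is already contained in the proof of Theorem~\ref{thm:engine}. There we applied Lemma~\ref{lem:transfer} with $\Omega=\EE$, $p=a/D$, $\mathcal{F}=\Mat(H)$, $s=s_D$ and $c=\tfrac12$, the hypothesis being supplied by Lemma~\ref{lem:largematching}. That lemma does not bound $Z(H)$ directly. It bounds $|\{F\in\Mat(H):|F|\ge s_D\}|$ from below by $c\,p^{-s}=\tfrac12(D/a)^{s_D}$, and only afterwards did we pass to $Z(H)$. So we simply record the intermediate inequality before that passage. The range of $D$ is the same as in Theorem~\ref{thm:engine}: large enough for \eqref{eq:dlarge} and for Lemmas~\ref{lem:kappa-small} and~\ref{lem:largematching}. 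Both thresholds depend only on $k$ and $c_k$.

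For the regular specialisation, take $H$ to be $d$-regular. Then $D=d$ and $|\EE|=nd/k$ by \eqref{eq:edgecount}, so $s_D=(1-\eta_d)n/k$. By Lemma~\ref{lem:kappa-small}, $\eta_d\to0$, and $\eta_d$ depends only on $d$, $k$ and $c_k$. Hence the matchings counted have size $(1-o_d(1))n/k$ uniformly in $n$.

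To see that there are exponentially many, take logarithms:
\[
  \log\Bigl(\tfrac12 (d/a)^{s_D}\Bigr)=(1-\eta_d)\frac nk(\log d-\log a)-\log2 .
\]
By \eqref{eq:loga}, $\log a=(k+o(1))\log\log d=o(\log d)$. Also $n\ge 1+(k-1)d\ge d$, which follows from linearity (or simply from $|\EE|\ge D$). Therefore the $-\log 2$ term is negligible, and the logarithm of the count is at least $(1-o_d(1))\frac nk\log d$, which tends to infinity linearly in $n$.

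There is no real obstacle. The only point to watch is that $s_D$ need not be an integer. Lemma~\ref{lem:transfer} allows a real threshold $s$, and ``size at least $s_D$'' makes sense as stated, so this causes no difficulty.
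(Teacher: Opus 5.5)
Your proposal is correct and follows the paper, which likewise observes that Lemma~\ref{lem:transfer}, with its hypothesis supplied by Lemma~\ref{lem:largematching}, already counts matchings of size at least $s_D$ before the passage to $Z(H)$ in \eqref{eq:Zlower}, and then specializes via $|\EE|=nd/k$. One small slip: in the regular case $|\EE|\ge D$ gives only $n\ge k$, not $n\ge d$, though either that or the linearity bound $n\ge1+(k-1)d$ already makes the $-\log 2$ term negligible.
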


\begin{proof}[Proof of Theorem~\ref{thm:main}]
Keep $k,D,H$ as above, put $\beta=k|\EE|/(nD)$, and abbreviate $\qbar=\qbar(H)$. Combining
Theorem~\ref{thm:shearer} with \eqref{eq:engine},
\[
  \frac{|\EE|}{D}\bigl(\log D-C_D\bigr)\ \le\ \log Z(H)
  \ \le\ \frac{n}{k}\Bigl[h(\qbar)+(1-\qbar)\log D\Bigr].
\]
Multiplying by $k/n>0$ and rearranging,
\[
  \qbar\log D\ \le\ h(\qbar)+(1-\beta)\log D+\beta C_D
  \ \le\ \log2+(1-\beta)\log D+\beta C_D,
\]
using $h\le\log2$. Hence
\begin{equation}
\label{eq:final}
  \qbar(H)\ \le\ 1-\beta+\frac{\beta C_D+\log2}{\log D}
  \ =\ 1-\beta+\bigl(\beta k+o_D(1)\bigr)\frac{\log\log D}{\log D}
\end{equation}
by \eqref{eq:Casymp}, uniformly since $0<\beta\le1$. If $H$ is $d$-regular, then $D=d$
and $\beta=1$, which gives the asserted bound on $U_k(d)$ and its convergence to zero.
\end{proof}

\begin{remark}[Where $k$ enters]
\label{rem:kenters}
Uniformity enters through the $k$-fold star cover in Theorem~\ref{thm:shearer}, the
incidence identity $\sum_v|\dg(v)|=k|\EE|$, and the exponent $1-1/k$ in
Theorem~\ref{thm:MR}. The last of these forces $a^{-1/k}\log D$ to vanish after the
colouring loss is multiplied by $\log D$, so the leading power of $a$ must be
$(\log D)^k$. The extra factor $(\log\log D)^{5k}$ absorbs the fourth power of the
logarithm in the Molloy--Reed error. Thus $\log a=(k+o(1))\log\log D$, which is the
source of the leading constant $k$ in the maximum-degree counting route. Theorem~\ref{thm:nibble}
uses a near-perfect-matching input instead of a colour class; this replaces the
$a^{1/k}$ bottleneck by $a^{1/(k-1)}$, while two-sided degree concentration creates the
separate constraint responsible for the constant $3$.
\end{remark}

\section{Concluding remarks}
\label{sec:open}

Lee's counterexamples to Kahn's pointwise prediction must be sparse. Corollary~\ref{cor:nodensity}
quantifies this: vertices with vacancy bounded away from zero occupy at most an
$O_k(\log\log d/\log d)$ fraction of a regular linear hypergraph. The mechanism specific
to our maximum-degree result is Theorem~\ref{thm:engine}: a $d$-regular
linear $k$-graph on $n$ vertices has at least $(d/\mathrm{polylog}\,d)^{n/k}$ matchings
whatever $n$ may be, and any hypergraph with that many matchings must, by Shearer, leave
few vertices uncovered on average. The quantitative sampling route in
Theorem~\ref{thm:nibble} improves the regular coefficient, but neither argument sees an
individual vertex and the gap to the natural scale remains wide.

There is a simple reason that $d^{-1/k}$ remains the natural benchmark. If $U$ is the
set of vertices left uncovered by $M$, then
\[
  \EX\bigl|\{e\in\EE:e\subseteq U\}\bigr|=\EX|M|.
\]
Indeed, pairs $(M,e)$ with $e$ disjoint from $M$ are in bijection with nonempty matchings
$M\cup\{e\}$ carrying a distinguished edge. Thus a $d$-regular $H$ has about $n/k$
uncovered induced edges in expectation when $M$ is almost perfect. If those edges occurred
at the density suggested by $|U|/n=\qbar(H)$, then
$\qbar(H)^k|\EE|\approx |\EE|/d$, suggesting $\qbar(H)\approx d^{-1/k}$. This is only a
first-moment heuristic: Lee's examples show that uncovered vertices need not behave
independently.

\begin{question}
For fixed $k\ge3$, \emph{what is the true order of $U_k(d)$?} We know
$\frac1{d+1}\le U_k(d)\le O_k(\log\log d/\log d)$, the lower bound from
Proposition~\ref{prop:pointlower} and the upper bound from Theorem~\ref{thm:nibble}, whereas the
natural benchmark is the scale $d^{-1/k}$ suggested by Kahn's pointwise prediction
\cite{Kahn95,Kahn97}, the $k=2$ theorem of \cite{KahnKim98} and
\cite[Conjecture 5.2]{Lee}. We do not even know
whether $U_k(d)=O(1/\log d)$.
\end{question}

\begin{question}
For fixed $k\ge3$, \emph{can the $\log\log d$ in Theorem~\ref{thm:engine} be removed
uniformly in $n$?} Is
$\log Z(H)\ge(n/k)(\log d-O_k(1))$ true for all finite $d$-regular linear $k$-graphs, with
no relation between $n$ and $d$? Bennett and Frieze \cite[Theorem 4(b)]{BennettFrieze}
achieve $\log Z=(n/k)(\log d-(k-1)+o(1))$, but under $(\log n)^{10}=o(d)$. Our $\log\log d$
comes from sampling at a polylogarithmic expected degree. The power
$\Delta^{1-1/k}$ in Theorem~\ref{thm:MR} forces the leading scale
$a=(\log d)^{k+o(1)}$, while its $(\log\Delta)^4$ factor dictates the additional
slowly varying factor. Molloy and Reed themselves suggest that this polylogarithmic factor
may be removable for $k\ge3$ \cite{MolloyReed}; even such an improvement would leave the
leading $(\log d)^k$ sampling scale in this route. An explicit $c_k$ would make
Theorem~\ref{thm:main} effective; removing the $\log\log d$ seems to require a different
route to a large colour class.
\end{question}

\begin{question}
\emph{Lee's Conjecture 5.2} \cite[Conjecture 5.2]{Lee}: for $k\ge3$ and
$0<\delta,\varepsilon<1$, if $H$ is an $n$-vertex $k$-graph with all degrees in
$[(1-n^{-\delta})d,(1+n^{-\delta})d]$ for some $d>n^{\varepsilon}$ and maximum codegree at
most $n^{-\delta}d$, is $q_H(v)=(1+o_n(1))d^{-1/k}$ for every $v$? This is untouched by
Theorem~\ref{thm:main}: a global average of $o(1)$ carries no pointwise information, and
the exponent is different.
\end{question}

\begin{question}
\emph{Bounded-codegree versions.} Does the conclusion persist if linearity is relaxed to
maximum codegree $o(d)$? Recent list-colouring estimates of Gould and Kelly
\cite{GouldKelly} improve the available error when the codegree tends to infinity and may
provide the appropriate colouring input, but both that input and the trimming argument
would need to be matched to the codegree regime. Near-regularity itself is already covered
by Corollary~\ref{cor:nearregular}.
\end{question}

Beyond the first moment, nothing here speaks to the distribution of $|M|$. At $k=2$ Kahn
and Kim \cite{KahnKim98} determine its variance asymptotically. For $k\ge3$ their
Conjecture~1.4 predicts $\operatorname{Var}|M|\sim n/(k^2d^{1/k})$ alongside the now-false
pointwise prediction; the variance half remains open. The method of this paper, which
discards all but one colour class of one sample, is not a plausible route to it.

\section*{Data and code availability}
The exact verification programs are included with the arXiv submission as
ancillary files and are maintained in the companion repository at
\url{https://github.com/agupta/average-vacancy-linear-hypergraphs}.

\section*{Acknowledgment of generative-AI assistance}
Anthropic Claude Code (Claude 5 family) and OpenAI Codex (GPT-5.6 family) were
used extensively for proof exploration, software development, exact
computational checks, literature discovery, and drafting and editing the
manuscript. The author selected the arguments and methods, checked the cited
sources and reported computations, and takes full responsibility for the
content. These systems are not authors or independent guarantors of
correctness.

\appendix

\section{The explicit rate}
\label{app:rate}

Two separate obstacles stand between Theorem~\ref{thm:main} and an effective statement.
The first is that the constant $c_k$ of Theorem~\ref{thm:MR} is not given a value in
\cite{MolloyReed} and we have not attempted to extract one; since $c_k$ enters $\kappa_D$
linearly, no effective threshold $d_0(\varepsilon,k)$ follows from the argument as it
stands, and everything below is stated as a function of a hypothesised value of $c_3$.

The second is that even with $c_3=1$ the degree threshold is enormous. We compare the
parameters used in the theorem,
\[
  a_\star=\bigl\lfloor L^3(\log L)^{15}\bigr\rfloor,
  \qquad \delta=L^{-1},
\]
with the transparent specialization $a=L^4$, again with $L=\log d$ at integer $L$.
The latter is admissible but has asymptotic coefficient $4$ rather than $3$. Put
\[
  \varepsilon^{(4)}=e^{-L^2/24},\qquad
  \kappa^{(4)}=c_3(1+L^{-1})^{2/3}L^{-4/3}
    \bigl(\log((1+L^{-1})L^4)\bigr)^4,
\]
and define $\eta^{(4)}$ by \eqref{eq:eta} with these quantities. Repeating the regular
case of the proof gives
\[
  \qbar(H)\le B_4(L,c_3):=\frac{C^{(4)}+\log2}{L},\qquad
  C^{(4)}=\eta^{(4)}L+(1-\eta^{(4)})\log a+\log2.
\]
Define $B_\star$ by the same formulas with $a=a_\star$ and the quantities from
\eqref{eq:eps}--\eqref{eq:eta}. For regular linear hypergraphs the last $\log2$ term
could be reduced further using $n\ge1+(k-1)d$, but that negligible refinement is not
included. The accompanying program evaluates both bounds using exact rational arithmetic
and rigorous two-sided enclosures (Appendix~\ref{app:computations}).
The grid evaluates the continuous formula in $L$; it does not assert that $e^L$ is an
integer degree at each listed point.

\begin{center}
\begin{tabular}{lll}
\hline
$\log d$ & theorem parameters $B_\star$ & transparent specialization $B_4$\\
\hline
$10^{3}$ & $0.318<B_\star<0.319$ & $0.9<B_4<1$\\
$10^{4}$ & $0.026<B_\star<0.027$ & $B_4>0.8$\\
$10^{6}$ & $0.000168<B_\star<0.000170$ &
  $B_4\in[0.085360456,\,0.085360457]$\\
\hline
\end{tabular}
\end{center}

\noindent
The theorem's parameters are substantially better than the transparent specialization,
but even their first listed value corresponds to $d=e^{1000}$, a number with roughly
$434$ decimal digits. The unknown constant also matters: for the transparent
specialization, with $c_3=100$ the bound is above $0.9$ at $L=10^6$, and at
$L=10^{10}$ it is more than fifty times its value at $c_3=1$.

For $c_3=1$, the program also certifies that $\kappa^{(4)}L$ exceeds $4\log L$ at
$L=10^6$ and $10^{20}$ but is below it at $L=10^{25}$. At $L=10^{25}$ it certifies
$4<C^{(4)}/\log L<4.3$, and at $L=10^{30}$ it certifies
$4<C^{(4)}/\log L<4.1$. These values illustrate how slowly that specialization reaches
its asymptotic coefficient. The numerical table is not used in the proof.

\section{The accompanying computations}
\label{app:computations}

Exact rational computations check the vacancy identity and entropy bound on small
examples, the deterministic trimming relations on two instances, finite ranges of the
size-bias and domination calculations, and the enclosures in Appendix~\ref{app:rate}.
They play no part in the proofs; the accompanying repository records their exact ranges,
commands, negative controls and limitations.

\end{document}